\DeclareMathAlphabet{\mathcalligra}{T1}{calligra}{c}{h}
\providecommand{\U}[1]{\protect\rule{.1in}{.1in}}
\newtheorem{theorem}{Theorem}[section]
\newtheorem{proposition}[theorem]{Proposition}
\newtheorem{lemma}[theorem]{Lemma}
\newtheorem{corollary}[theorem]{Corollary}
\let\oldremark\remark
\renewcommand{\remark}{\oldremark\normalfont}
\newtheorem{example}[theorem]{Example}
\let\oldexample\example
\renewcommand{\example}{\oldexample\normalfont}
\newtheorem{examples}[theorem]{Examples}
\let\oldexamples\examples
\renewcommand{\examples}{\oldexamples\normalfont}
\def\<{{\langle}}
\def\>{{\rangle}}
\def\bea{\begin{eqnarray*} }
\def\eea{\end{eqnarray*} }
\def\be{\begin{equation} }
\def\ee{\end{equation} }
\def\qed{\ifhmode\unskip\nobreak\fi\ifmmode\ifinner
\else\hskip5 pt \fi\fi\hbox{\hskip5 pt \vrule width4 pt
height6 pt  depth1.5 pt \hskip 1pt }}
\DeclareMathOperator*{\diver}{div}
\DeclareMathOperator*{\Vol}{Vol}
\DeclareMathOperator*{\supp}{supp}
\DeclareMathOperator*{\expo}{exp}
\DeclareMathOperator*{\grad}{grad}
\DeclareMathOperator*{\ess}{ess}
\DeclareMathOperator*{\Lip}{Lip}
\DeclareMathOperator*{\av}{av}
\begin{document}

\title{Spectral estimates and discreteness of spectra under Riemannian submersions}
\author{Panagiotis Polymerakis}
\date{}

\maketitle

\renewcommand{\thefootnote}{\fnsymbol{footnote}}
\footnotetext{\emph{Date:} \today} 
\renewcommand{\thefootnote}{\arabic{footnote}}

\renewcommand{\thefootnote}{\fnsymbol{footnote}}
\footnotetext{\emph{2010 Mathematics Subject Classification.} 58J50, 35P15, 53C99.}
\renewcommand{\thefootnote}{\arabic{footnote}}

\renewcommand{\thefootnote}{\fnsymbol{footnote}}
\footnotetext{\emph{Key words and phrases.} Bottom of spectrum, discrete spectrum, Riemannian submersion.}
\renewcommand{\thefootnote}{\arabic{footnote}}

\begin{abstract}
For Riemannian submersions, we establish some estimates for the spectrum of the total space in terms of the spectrum of the base space and the geometry of the fibers. In particular, for Riemannian submersions of complete manifolds with closed fibers of bounded mean curvature, we show that the spectrum of the base space is discrete if and only if the spectrum of the total space is discrete.
\end{abstract}

\section{Introduction}

The spectrum of the Laplacian on a Riemannian manifold is an isometric invariant whose relation with the geometry of the manifold is not comprehended completely. In particular, its behavior under maps between Riemannian manifolds, which respect the geometry of the manifolds to some extent, remains largely unclear. In this paper, we study the behavior of the spectrum under Riemannian submersions.

The notion of Riemannian submersion was introduced in the sixties as a tool to study the geometry of a manifold in terms of the geometry of simpler components, namely, the base space and the fibers. Similarly to other geometric quantities, it is natural to describe the spectrum of the total space in terms of the geometry and the spectrum of the base space and the fibers. Of course, the term geometry of the fibers refers both to the intrinsic and the extrinsic geometry of the fibers as submanifolds of the total space. There are various results on the spectrum of closed total spaces, in case the submersion has totally geodesic, or minimal fibers, or fibers of basic mean curvature (cf. for instance \cite{MR2963622} and the references therein).
Our results focus mostly on the non-compact case, which is in general more complicated and less understood.

To set the stage, let $p \colon M_{2} \to M_{1}$ be a Riemannian submersion and denote by $F_{x} := p^{-1}(x)$ the fiber over $x \in M_{1}$. In the first part of the paper, we establish a lower bound for the bottom of the spectrum of the total space, under the assumption that the (unnormalized) mean curvature $H$ of the fibers is bounded in a specific way. In particular, we extend the recent result of \cite{MR3787357} about Riemannian submersions. According to \cite[Theorem 1.1]{MR3787357}, if $M_{1}$ is the $m$-dimensional hyperbolic space $\mathbb{H}^{m}$, and the mean curvature vector field of the fibers is bounded by $\| H \| \leq C \leq m-1$, then the bottom of the spectrum of the Laplacian on $M_{2}$ satisfies
\[
\lambda_{0}(M_{2}) \geq \frac{(m -1 - C)^{2}}{4}.
\]
It should be noticed that $m-1 = 2 \sqrt{\lambda_{0}(\mathbb{H}^{m})}$. This result is extended in \cite[Theorem 5.1]{MR3787357} to the case where the base manifold is Hadamard with sectional curvature bounded from above by a negative constant, or the base manifold is a warped product of some special form. In its general version, in the assumption and the conclusion of the above formulation, $m-1$ is replaced by a certain lower bound for $2 \sqrt{\lambda_{0}(M_{1})}$. Our first result generalizes this estimate in various directions, and provides some information in the case where the equality holds and $\lambda_{0}(M_{1})$ is an isolated point of the spectrum of the Laplacian on $M_{1}$.

\begin{theorem}\label{lower bound}
Let $p \colon M_{2} \to M_{1}$ be a Riemannian submersion, such that the mean curvature of the fibers satisfies $\| H \| \leq C \leq 2 \sqrt{\lambda_{0}(M_{1})}$. Then
\[
\lambda_{0}(M_{2}) \geq (\sqrt{\lambda_{0}(M_{1})} - C/2)^{2} + \inf_{x \in M_{1}} \lambda_{0}(F_{x}).
\]
If, in addition, the equality holds and $\lambda_{0}(M_{1}) \notin \sigma_{\ess}(M_{1})$, then $\lambda_{0}(F_{x})$ is almost everywhere equal to its infimum.
\end{theorem}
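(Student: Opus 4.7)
The plan is to apply Rayleigh-quotient reasoning on $M_2$ using the groundstate of $M_1$ as a multiplicative weight, paired with a horizontal--vertical decomposition of the gradient. Assuming $M_1$ is complete, the classical Cheng--Yau/Sullivan construction of positive eigenfunctions yields a smooth $\varphi>0$ on $M_1$ with $\Delta_{M_1}\varphi=\lambda_0(M_1)\varphi$. Setting $\tilde\varphi:=\varphi\circ p$, the Riemannian-submersion formula for the Laplacian of a basic function gives
$$\Delta_{M_2}\tilde\varphi=\lambda_0(M_1)\tilde\varphi-H(\tilde\varphi),$$
where $H$ is the mean curvature vector of the fibers and $\nabla\tilde\varphi$ is horizontal since $\tilde\varphi$ is constant along fibers.

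For an arbitrary $u\in C_c^\infty(M_2)$ and a parameter $s\in(0,1)$ to be chosen, I write $u=\tilde\varphi^s v$. Expanding $|\nabla u|^2$ and integrating the cross-term by parts against the equation for $\Delta\tilde\varphi$, one obtains
\begin{equation*}
\int|\nabla u|^2=\int\tilde\varphi^{2s}|\nabla v|^2+s(1-s)\int u^2|\nabla\log\tilde\varphi|^2-s\int u^2\langle H,\nabla\log\tilde\varphi\rangle+s\lambda_0(M_1)\int u^2.
\end{equation*}
Because $\nabla\tilde\varphi$ is horizontal, the first term splits as $\tilde\varphi^{2s}|\nabla v|^2=|\nabla^v u|^2+|\nabla^h u-s u\nabla\log\tilde\varphi|^2\ge|\nabla^v u|^2$; restricting to each fiber, the variational characterization of $\lambda_0(F_x)$ together with the submersion coarea formula yields $\int|\nabla^v u|^2\ge(\inf_x\lambda_0(F_x))\int u^2$. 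For the mean-curvature term, Cauchy--Schwarz and $\|H\|\le C$ imply $|\langle H,\nabla\log\tilde\varphi\rangle|\le C|\nabla\log\tilde\varphi|$. Writing $\xi:=|\nabla\log\tilde\varphi|\ge 0$, the pointwise quadratic $s(1-s)\xi^2-sC\xi+s\lambda_0(M_1)$ is minimized in $\xi$ by $s\lambda_0(M_1)-sC^2/(4(1-s))$, and maximizing this in $s\in(0,1)$ at $s^*=1-C/(2\sqrt{\lambda_0(M_1)})$ (which lies in $[0,1)$ by the hypothesis $C\le 2\sqrt{\lambda_0(M_1)}$) produces the constant $(\sqrt{\lambda_0(M_1)}-C/2)^2$. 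Combining all contributions yields the asserted lower bound on $\lambda_0(M_2)$.

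For the equality statement, the idea is to track the non-negative excess terms produced by each inequality above. Suppose $\lambda_0(M_2)=(\sqrt{\lambda_0(M_1)}-C/2)^2+\mu_0$ with $\mu_0:=\inf_x\lambda_0(F_x)$. Any $L^2$-normalized minimizing sequence $u_n\in C_c^\infty(M_2)$ must saturate every intermediate inequality in the limit; in particular, $\int_{M_2}(\lambda_0(F_{p(\cdot)})-\mu_0)u_n^2\to 0$, so $u_n^2$ concentrates (in measure) on the pullback of $\{x:\lambda_0(F_x)=\mu_0\}$. The hypothesis $\lambda_0(M_1)\notin\sigma_{\ess}(M_1)$ guarantees that $\varphi$ can be taken as an $L^2$ groundstate on $M_1$ and that a genuine spectral gap above $\lambda_0(M_1)$ exists. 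The main obstacle is converting this concentration into the a.e.\ statement $\lambda_0(F_x)=\mu_0$. My plan is a contradiction argument: assuming the set $\{x:\lambda_0(F_x)>\mu_0\}$ had positive measure in $M_1$, I would build test functions of the form $\tilde\varphi^{s^*}\chi$ with $\chi$ adapted to the base, exploiting the $L^2$-character of $\varphi$ and the spectral gap to show that the Rayleigh quotient can be pushed below the claimed equality value, contradicting the hypothesis. Identifying an admissible family of such test functions on $M_2$ and carrying out the Rayleigh-quotient estimate with enough precision to force an a.e.\ conclusion on the base is the principal difficulty.
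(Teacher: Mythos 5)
Your proof of the inequality is correct and takes a genuinely different route from the paper. You perform a ground-state transform on the total space: substituting $u=\tilde\varphi^{s}v$, where $\tilde\varphi$ is the lift of a positive $\lambda_{0}(M_{1})$-eigenfunction, integrating the cross term by parts against $\Delta\tilde\varphi=\lambda_{0}(M_{1})\tilde\varphi+\langle\grad\tilde\varphi,H\rangle$, and optimizing first pointwise in $\xi=\|\grad\log\tilde\varphi\|$ and then in $s$. The identity checks out (the coefficient $s^{2}-s(2s-1)=s(1-s)$), the horizontal/vertical splitting is used correctly, and the two-step optimization does yield $(\sqrt{\lambda_{0}(M_{1})}-C/2)^{2}+\inf_{x}\lambda_{0}(F_{x})$. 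The paper instead pushes the test function down to $M_{1}$ via $g(x)=(\int_{F_{x}}f^{2})^{1/2}$, bounds $\|\grad g\|^{2}$ by $\int_{F_{x}}\|(\grad f)^{h}-fH/2\|^{2}$ (Lemma \ref{pushdown estimate}), and applies the variational characterization of $\lambda_{0}(M_{1})$ directly to $g$; this needs no positive eigenfunction and no completeness. That last point is a real caveat for you: you assume $M_{1}$ complete in order to produce $\varphi$, whereas the theorem makes no such assumption and the paper exploits precisely this in the proof of Corollary \ref{discrete spectrum}, where Theorem \ref{lower bound} is applied to $M_{1}\smallsetminus K_{n}$. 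A positive $\lambda_{0}$-eigenfunction does exist on incomplete manifolds (by exhaustion), so your argument is repairable, but as written it proves a weaker statement.

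The equality case is where your proposal has a genuine gap, which you yourself flag. From saturation of the fiberwise inequality you correctly obtain $\int_{M_{2}}(\lambda_{0}(F_{p(\cdot)})-\mu_{0})u_{n}^{2}\to 0$, but this alone cannot yield the almost-everywhere conclusion: the measures $u_{n}^{2}\,d\Vol$ may put vanishing mass on the set $\{\Lambda>\mu_{0}\}$ even when that set has positive measure (the mass can drift off to infinity or concentrate elsewhere). What is missing is a uniform positive lower bound, in the limit, for the mass that the pushed-down densities assign to every set of positive measure. The paper obtains exactly this by showing that the pushdowns $g_{n}$ form a minimizing sequence for $\lambda_{0}(M_{1})$ on the base --- this step requires the quantitative gradient estimate of Lemma \ref{pushdown estimate} and Proposition \ref{pushdown}, which your total-space computation never produces --- and then invoking \cite{Mine2} to conclude that, since $\lambda_{0}(M_{1})\notin\sigma_{\ess}(M_{1})$, a subsequence of $g_{n}$ converges in $L^{2}(M_{1})$ to a strictly positive ground state $\varphi$; the chain $\varepsilon\geq c\int_{A_{c}}g_{n}^{2}\to c\int_{A_{c}}\varphi^{2}$ then forces each $A_{c}$ to be null. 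Your proposed completion by contradiction is not viable as stated: no test function can have Rayleigh quotient below $\lambda_{0}(M_{2})$ (Proposition \ref{bottom}), and functions of the form $\tilde\varphi^{s^{*}}\chi$ with $\chi$ lifted from the base have vanishing vertical gradient, so they do not interact with the fiber term at all (and are not even compactly supported on $M_{2}$ unless the fibers are compact). The second assertion of the theorem therefore remains unproven in your proposal.
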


It should be emphasized that, in this theorem, there are no assumptions on the geometry or the topology of the base space. In particular, Theorem \ref{lower bound} gives a quite natural (and sharper than \cite[Theorem 5.1]{MR3787357}) estimate for submersions over negatively curved symmetric spaces, and yields an analogous lower bound if the base manifold is a complete, negatively curved, locally symmetric space. Moreover, the manifolds involved in Theorem \ref{lower bound} do not have to be complete, which in the sequel allows us to derive a similar estimate involving the bottoms of the essential spectra, by exploiting the Decomposition Principle.

Conceptually, it seems interesting that the last term in the estimate of Theorem \ref{lower bound} shows up, while in \cite{MR3787357} the intrinsic geometry of the fibers does not play any role. For example, equality in the estimate of Theorem \ref{lower bound} holds if $M_{2}$ is the Riemannian product $M_{1} \times F$ for any Riemannian manifold $F$. For the aforementioned reason, equality in the estimate of \cite[Theorem 1]{MR3787357} holds for $M_{2} = \mathbb{H}^{k} \times F$ only for Riemannian manifolds $F$ with $\lambda_{0}(F) = 0$.

In general, it is quite important to establish lower bounds for the bottom of the spectrum, or even deduce whether it is zero or not. It follows from Theorem \ref{lower bound} that if $p \colon M_{2} \to M_{1}$ is a Riemannian submersion with minimal fibers, $M_{1}$ is closed, and $\lambda_{0}(M_{2}) = 0$, then the bottom of the spectrum of almost any fiber is zero, since the spectrum of the Laplacian on $M_{1}$ is discrete. As Example \ref{discontinuous} shows, in this case, the bottom of the spectrum of some fibers may be positive. In principle, in order to deduce that the bottom of the spectrum is positive, one needs information on the global geometry of the underlying manifold. However, in the above setting, we obtain that $\lambda_{0}(M_{2}) > 0$ if $\lambda_{0}(F_{x}) > 0$ for any $x$ in a set of positive measure.

In the second part of the paper, we consider Riemannian submersions $p \colon M_{2} \to M_{1}$ of complete manifolds with closed fibers. Such submersions under further constraints, are studied in \cite{MR2891739}. According to \cite[Theorem 1]{MR2891739}, if the fibers are minimal, then the spectra and the essential spectra of the Laplacians satisfy $\sigma(M_{1}) \subset \sigma(M_{2})$ and $\sigma_{\ess}(M_{1}) \subset \sigma_{\ess}(M_{2})$.
In this paper, we extend this result in a natural way. Instead of comparing the spectra of the Laplacians, we compare the spectrum of a Schr\"{o}dinger operator on $M_{1}$, whose potential is determined by the volume of the fibers, with the spectrum of the Laplacian on $M_{2}$. To be more precise, let $V(x)$ be the volume of the fiber over $x \in M_{1}$, and consider the Schr\"{o}dinger operator
\[
S := \Delta - \frac{\Delta \sqrt{V}}{\sqrt{V}}
\]
on $M_{1}$. The potential of this operator measures the deviation of $\sqrt{V}$ from being harmonic. In particular, if the submersion has minimal fibers (or more generally, fibers of constant volume), then $S$ coincides with the Laplacian on $M_{1}$. 

In the case where $M_{2}$ is closed and the submersion has fibers of basic mean curvature, Bordoni \cite{Bordoni} considered the restriction $\Delta_{c}$ of the Laplacian acting on lifted functions and the restriction $\Delta_{0}$ of the Laplacian acting on functions whose average is zero on any fiber. In \cite[Theorem 1.6]{Bordoni}, he showed that $\sigma(M_{2}) = \sigma( \Delta_{c}) \cup \sigma (\Delta_{0})$. In this situation, the spectrum of $S$ coincides with the spectrum of $\Delta_{c}$. It should be noticed that expressing the latter one as the spectrum of a Schr\"{o}dinger operator on the base manifold allows us to relate it more easily to the spectrum of the Laplacian on the base manifold.

For submersions of complete manifolds with closed fibers, we compare the bottoms of the (essential) spectra of $S$ and of the Laplacian on the total space. If the submersion has fibers of basic mean curvature, we prove that the (essential) spectrum of $S$ is contained in the (essential, respectively) spectrum of the Laplacian on $M_{2}$. This is formulated in the following generalization of \cite[Theorem 1]{MR2891739}.

\begin{theorem}\label{thm2}
Let $p \colon M_{2} \to  M_{1}$ be a Riemannian submersion of complete manifolds, with closed fibers. Then $\lambda_{0}(M_{2}) \leq \lambda_{0}(S)$ and $\lambda_{0}^{\ess}(M_{2}) \leq \lambda_{0}^{\ess}(S)$. If, in addition, the fibers have basic mean curvature, then $\sigma(S) \subset \sigma(M_{2})$ and $\sigma_{\ess}(S) \subset \sigma_{\ess}(M_{2})$.
\end{theorem}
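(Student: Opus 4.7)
The guiding idea is to produce an isometric embedding $W \colon L^{2}(M_{1}) \hookrightarrow L^{2}(M_{2})$ that is compatible with $S$ and $\Delta_{M_{2}}$, namely
\[
Wf := \frac{f \circ p}{\sqrt{V \circ p}},
\]
and to read off the spectral inequalities from this map. The fact that $W$ is an isometry (into $L^{2}(M_{2})$) is Fubini for Riemannian submersions with closed fibers: integrating $|Wf|^{2}$ first along the fiber $F_{x}$ gives $|f(x)|^{2}/V(x)\cdot V(x) = |f(x)|^{2}$, whose total integral over $M_{1}$ is $\|f\|_{L^{2}(M_{1})}^{2}$.

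The plan for the first (unconditional) assertion $\lambda_{0}(M_{2}) \leq \lambda_{0}(S)$ is to compute the Rayleigh quotient of $Wf$. Since $p$ is a Riemannian submersion, $\nabla(Wf)$ is the horizontal lift of $\nabla(f/\sqrt{V})$, so $|\nabla(Wf)|^{2}(y) = |\nabla(f/\sqrt{V})|^{2}(p(y))$. Another application of fiber integration yields
\[
\int_{M_{2}} |\nabla(Wf)|^{2}\, dv_{M_{2}} \;=\; \int_{M_{1}} V\, |\nabla(f/\sqrt{V})|^{2}\, dv_{M_{1}}.
\]
The standard ground-state-transform identity (obtained by writing $g = f/\sqrt{V}$, expanding $|\nabla(gu)|^{2}$ with $u = \sqrt{V}$, and integrating by parts the cross term $\tfrac{1}{2}\langle \nabla g^{2}, \nabla u^{2}\rangle$ on $M_{1}$) gives
\[
\int_{M_{1}} V\, |\nabla(f/\sqrt{V})|^{2} = \int_{M_{1}} |\nabla f|^{2} - \int_{M_{1}} \frac{\Delta \sqrt{V}}{\sqrt{V}}\, f^{2} = \langle f, Sf\rangle_{L^{2}(M_{1})}.
\]
Hence the $L^{2}(M_{2})$-Rayleigh quotient of $Wf$ equals the $S$-Rayleigh quotient of $f$; taking an infimum over $f \in C_{c}^{\infty}(M_{1})$ gives $\lambda_{0}(M_{2}) \leq \lambda_{0}(S)$. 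For the essential spectrum, I would use Persson's characterization: given a compact $K \subset M_{2}$, $p(K)$ is compact in $M_{1}$ (as $p$ is continuous), so minimizing sequences for $S$ supported in $M_{1}\setminus p(K)$ are transported by $W$ to test functions supported in $M_{2}\setminus K$, which yields $\lambda_{0}^{\ess}(M_{2}) \leq \lambda_{0}^{\ess}(S)$.

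For the second assertion, under the assumption of basic mean curvature, I would upgrade the inequality of Rayleigh quotients to a genuine intertwining $\Delta_{M_{2}} W = W S$. The key geometric input is the well-known first-variation identity: if the mean-curvature vector $H$ of the fibers is the horizontal lift of a vector field $h$ on $M_{1}$, then for each vector $X \in T_{x}M_{1}$,
\[
X(V)(x) = -\int_{F_{x}} \langle H, \widetilde{X}\rangle\, dA = -\langle h, X\rangle(x)\, V(x),
\]
so $h = -\nabla \log V$. Combining this with the divergence formula for the horizontal lift $\widetilde{X}$ of a vector field $X$ on $M_{1}$, namely $\mathrm{div}_{M_{2}}\widetilde{X} = (\mathrm{div}_{M_{1}}X)\circ p - \langle H, \widetilde{X}\rangle$, one obtains $\Delta_{M_{2}}(f\circ p) = \bigl(\Delta f - \langle \nabla \log V, \nabla f\rangle\bigr)\circ p$ for $f \in C^{\infty}(M_{1})$. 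Applying this to $f/\sqrt{V}$ and using the same ground-state-transform identity in the reverse direction shows $\Delta_{M_{2}}(Wf) = W(Sf)$.

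The spectrum containment is then a standard Weyl sequence transport: given $\lambda \in \sigma(S)$ and a Weyl sequence $\{f_{n}\} \subset C_{c}^{\infty}(M_{1})$ with $\|f_{n}\|=1$ and $\|(S-\lambda)f_{n}\|\to 0$, the isometry $W$ and the intertwining yield $\|Wf_{n}\|=1$ and $\|(\Delta_{M_{2}}-\lambda)Wf_{n}\| = \|W(S-\lambda)f_{n}\| = \|(S-\lambda)f_{n}\| \to 0$, i.e.\ $\lambda \in \sigma(M_{2})$. For $\sigma_{\ess}(S) \subset \sigma_{\ess}(M_{2})$, I would choose the Weyl sequence to have supports escaping every compact of $M_{1}$ (possible by the Decomposition Principle on $M_{1}$); then $\mathrm{supp}(Wf_{n}) \subset p^{-1}(\mathrm{supp}\,f_{n})$ escapes every compact of $M_{2}$, invoking the fact that $p^{-1}(K)$ is compact for compact $K \subset M_{1}$ (since complete Riemannian submersions with closed fibers are locally trivial fiber bundles, by Ehresmann). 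The main obstacle I foresee is the careful verification of signs and conventions in the divergence/mean-curvature computation leading to $\Delta_{M_{2}}(f\circ p)$, together with justifying that $C_{c}^{\infty}(M_{1})$ is a core for $S$ so that the variational and Weyl-sequence arguments apply to the self-adjoint realization of $S$.
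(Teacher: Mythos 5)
Your proposal is correct and is essentially the paper's own argument in different notation: your isometry $W f = (f\circ p)/\sqrt{V\circ p}$ together with the ground-state transform is exactly the paper's renormalization $S_{\sqrt{V}}$ of $S$ with respect to the ground state $\sqrt{V}$ (using $S\sqrt{V}=0$ and Lemma \ref{lift}), your intertwining $\Delta_{M_{2}}W=WS$ is the norm identity (\ref{eq lift 2}), and the transport of Weyl/characteristic sequences is the same. The only cosmetic differences are that you invoke Persson's characterization where the paper uses sequences with pairwise disjoint supports (Propositions \ref{disjoint} and \ref{spectrum}), and the core issue you flag is settled by the essential self-adjointness of $S$ on the complete manifold $M_{1}$, as in the paper's preliminaries.
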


Finally, we consider the problem of discreteness of spectra under Riemannian submersions. A Riemannian manifold $M$ has \textit{discrete spectrum} if the essential spectrum of the Laplacian on $M$ is empty. Although there are many results establishing connections between the geometry of $M$ and the discreteness of the spectrum of $M$ (cf. for example \cite{MR2891739} and the references provided there), their relation is not comprehended completely. 

In our context, there are examples of Riemannian submersions $p \colon M_{2} \to M_{1}$ of complete manifolds with closed fibers, such that $M_{1}$ has discrete spectrum and $M_{2}$ does not have discrete spectrum, or $M_{1}$ does not have discrete spectrum and $M_{2}$ has discrete spectrum (cf. \cite[Subsection 4.2]{MR2891739}). In \cite{MR2891739}, it is proved that if $p$ has minimal fibers, then $M_{1}$ has discrete spectrum if and only if $M_{2}$ has discrete spectrum. As an application of Theorems \ref{lower bound} and \ref{thm2}, we extend this equivalence under the weaker assumption that the fibers have bounded mean curvature.

\begin{corollary}\label{discrete spectrum}
Let $p \colon M_{2} \to M_{1}$ be a Riemannian submersion of complete manifolds, with closed fibers of bounded mean curvature. Then $M_{1}$ has discrete spectrum if and only if $M_{2}$ has discrete spectrum.
\end{corollary}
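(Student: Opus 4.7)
The plan is to combine Theorems \ref{lower bound} and \ref{thm2} with the Decomposition Principle, handling the two implications separately. Since $M_{2}$ is complete and the fibers are closed, $p$ is proper, so for any compact exhaustion $\{K_{n}\}$ of $M_{1}$ the family $\{p^{-1}(K_{n})\}$ is a compact exhaustion of $M_{2}$. Recall that $M$ has discrete spectrum if and only if $\lambda_{0}(M \setminus L_{n}) \to +\infty$ along any compact exhaustion $\{L_{n}\}$.

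\emph{Forward direction ($M_{1}$ discrete $\Rightarrow M_{2}$ discrete).} The restriction $p \colon p^{-1}(M_{1} \setminus K_{n}) \to M_{1} \setminus K_{n}$ is still a Riemannian submersion whose fibers satisfy $\|H\| \leq C$. Since $\lambda_{0}(M_{1} \setminus K_{n}) \to +\infty$, the hypothesis $C \leq 2\sqrt{\lambda_{0}(M_{1} \setminus K_{n})}$ of Theorem \ref{lower bound} holds for all large $n$ (no completeness is needed), and the theorem yields
\[
\lambda_{0}(M_{2} \setminus p^{-1}(K_{n})) = \lambda_{0}(p^{-1}(M_{1} \setminus K_{n})) \geq \bigl(\sqrt{\lambda_{0}(M_{1} \setminus K_{n})} - C/2\bigr)^{2} \longrightarrow +\infty,
\]
so $M_{2}$ has discrete spectrum.

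\emph{Reverse direction ($M_{2}$ discrete $\Rightarrow M_{1}$ discrete).} Theorem \ref{thm2} gives $\lambda_{0}^{\ess}(S) \geq \lambda_{0}^{\ess}(M_{2}) = +\infty$, and the Decomposition Principle for the Schr\"odinger operator $S$ then yields $\lambda_{0}(S|_{M_{1} \setminus K_{n}}) \to +\infty$. To pass from $S = \Delta - \Delta\sqrt{V}/\sqrt{V}$ to $\Delta$ on $M_{1}$, I use the first variation of the fiber volume $V(x) := \vol(F_{x})$: for every unit $X \in T_{x} M_{1}$ one has $X(V)(x) = -\int_{F_{x}}\langle H, \widetilde{X}\rangle\, d\mu_{F_{x}}$, so $\|H\| \leq C$ forces $|\nabla \log V| \leq C$. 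Setting $q := \Delta\sqrt{V}/\sqrt{V}$, integration by parts gives, for every $f \in C_{c}^{\infty}(M_{1})$,
\[
\int_{M_{1}} q f^{2}\,d\mu = \int_{M_{1}} f\,\langle \nabla \log V, \nabla f\rangle\,d\mu - \tfrac{1}{4}\int_{M_{1}} |\nabla \log V|^{2} f^{2}\,d\mu.
\]
Dropping the non-positive term and applying Young's inequality with parameter $\varepsilon > 0$ yields $\lambda_{0}(S|_{\Omega}) \leq (1 + C\varepsilon)\,\lambda_{0}(\Omega) + C/(4\varepsilon)$ for every open $\Omega \subset M_{1}$. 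Taking $\Omega = M_{1} \setminus K_{n}$ then forces $\lambda_{0}(M_{1} \setminus K_{n}) \to +\infty$, so $M_{1}$ has discrete spectrum.

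The main obstacle is this last descent from $S$ to $\Delta$. The hypothesis $\|H\| \leq C$ controls $\nabla \log V$ but not $\Delta \log V$, so the potential $q = \tfrac{1}{2}\Delta \log V - \tfrac{1}{4}|\nabla \log V|^{2}$ need not be pointwise bounded and a naive Schr\"odinger perturbation argument fails. The integration-by-parts identity above converts the uncontrolled Laplacian term into a first-order expression that is controlled by the mean curvature hypothesis, and Young's inequality then absorbs a small multiple of the Dirichlet energy at the cost of an additive constant, which is precisely what the discreteness conclusion can tolerate.
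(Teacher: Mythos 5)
Your proof is correct, and the forward direction ($M_1$ discrete $\Rightarrow M_2$ discrete) is exactly the paper's argument: apply Theorem \ref{lower bound} to the restrictions of $p$ over $M_1 \smallsetminus K_n$ (componentwise, since these sets may be disconnected) and invoke the Decomposition Principle on $M_2$ via the compact exhaustion $p^{-1}(K_n)$.

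For the reverse direction the substance is also the same, but the packaging differs. The paper isolates the key estimate as Corollary \ref{closed and bounded}: using $\grad V = -VX$ with $\|X\| \leq C$ and the divergence formula, it shows $\mathcal{R}_S(f) \leq (\sqrt{\mathcal{R}(f)} + C/2)^2$, and then bounds $\lambda_0^{\ess}(M_2) \leq \lambda_0^{\ess}(S) \leq (\sqrt{\lambda_0^{\ess}(M_1)} + C/2)^2$ directly, using disjointly supported test functions (Corollary \ref{bottom of essential spectrum} and Proposition \ref{disjoint}) rather than exhaustions. You instead run the Decomposition Principle on $M_1$ for $S$ and transfer the divergence of $\lambda_0(S|_{M_1\smallsetminus K_n})$ to $\lambda_0(M_1 \smallsetminus K_n)$ via the same integration-by-parts comparison; your identity $\int_{M_1} q f^2 = \int_{M_1} f \langle \grad \log V, \grad f\rangle - \tfrac14 \int_{M_1} |\grad \log V|^2 f^2$ is precisely the paper's $-\Delta\sqrt{V}/\sqrt{V} = \tfrac14\|X\|^2 - \tfrac12 \diver X$ tested against $f^2$. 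Both routes work; the paper's has the advantage of producing the quantitative bound on $\lambda_0^{\ess}(M_2)$ as a standalone statement, while yours keeps everything at the level of Rayleigh quotients on exteriors of compacta.

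One small bookkeeping slip: you cannot simply "drop the non-positive term." Since $S = \Delta - q$, the term $-\tfrac14\int |\grad \log V|^2 f^2$ in your expression for $\int q f^2$ enters $\mathcal{R}_S(f)$ with a \emph{plus} sign, so for an upper bound on $\lambda_0(S|_\Omega)$ it must be estimated by $\tfrac{C^2}{4}\int f^2$ rather than discarded. This only adds the constant $C^2/4$ to your bound, i.e. $\lambda_0(S|_\Omega) \leq (1+C\varepsilon)\lambda_0(\Omega) + C/(4\varepsilon) + C^2/4$, which changes nothing in the conclusion.
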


The paper is organized as follows: In Section \ref{Section Preliminaries}, we give some preliminaries involving the spectrum of Schr\"{o}dinger operators, and recall some basic facts on Riemannian submersions. In Section \ref{Section Lower bound}, we study Riemannian submersions with fibers of bounded mean curvature and establish Theorem \ref{lower bound}. In Section \ref{Section closed fibers}, we consider Riemannian submersions with closed fibers and prove Theorem \ref{thm2} and Corollary \ref{discrete spectrum}.

\medskip

\textbf{Acknowledgements.} I would like to thank Werner Ballmann and Dorothee Sch\"{u}th for their helpful comments and remarks. I am also grateful to the Max Planck Institute for Mathematics in Bonn for its support and hospitality.

\section{Preliminaries}\label{Section Preliminaries}

Throughout this paper manifolds are assumed to be connected and without boundary, unless otherwise stated. For a possibly non-connected Riemannian manifold $M$, we denote by $\Delta$ the non-negative definite Laplacian on $M$. A \textit{Schr\"{o}dinger operator} on $M$ is an operator of the form $S = \Delta + V$, with $V \in C^{\infty}(M)$, such that
\begin{equation}\label{bounded from below}
\langle Sf ,f \rangle_{L^{2}(M)} \geq c \| f \|^{2}_{L^{2}(M)}
\end{equation}
for some $c \in \mathbb{R}$ and any $f \in C^{\infty}_{c}(M)$. Then the operator
\begin{equation}\label{operator}
S \colon C^{\infty}_{c}(M) \subset L^{2}(M) \to L^{2}(M)
\end{equation}
is densely defined, symmetric and bounded from below. Therefore, it admits Friedrichs extension. We denote the spectrum and the essential spectrum of its Friedrichs extension by $\sigma(S)$ and $\sigma_{\ess}(S)$, respectively, and their \textit{bottoms} (that is, their minimums) by $\lambda_{0}(S)$ and $\lambda_{0}^{\ess}(S)$, respectively. In case of the Laplacian (that is, $V = 0$) these sets and quantities are denoted by $\sigma(M)$, $\sigma_{\ess}(M)$ and $\lambda_{0}(M)$, $\lambda_{0}^{\ess}(M)$, respectively. The spectrum of $S$ is called \textit{discrete} if $\sigma_{\ess}(S)$ is empty. In this case, we have by definition that $\lambda_{0}^{\ess}(S) = + \infty$.

For a non-zero, compactly supported, Lipschitz function $f$ on $M$, the \textit{Rayleigh quotient} of $f$ with respect to $S$ is defined by
\[
\mathcal{R}_{S}(f) := \frac{\int_{M} (\| \grad f \|^{2} + Vf^{2})}{\int_{M} f^{2}}.
\] 
The Rayleigh quotient of $f$ with respect to the Laplacian is denoted by $\mathcal{R}(f)$. The next characterization for the bottom of the spectrum of a Schr\"{o}dinger operator follows from Rayleigh's Theorem and the fact that the Friedrichs extension of an operator preserves its lower bound (cf. for instance \cite[Subsection 2.1]{Mine 2} and the references therein).
\begin{proposition}\label{bottom}
Let $S$ be a Schr\"{o}dinger operator on a Riemannian manifold $M$. Then the bottom of the spectrum of $S$ is given by
\[
\lambda_{0}(S) = \inf_{f} \mathcal{R}_{S}(f),
\]
where the infimum is taken over all $f \in C^{\infty}_{c}(M) \smallsetminus \{0\}$, or over all $f \in \Lip_{c}(M) \smallsetminus \{0\}$.
\end{proposition}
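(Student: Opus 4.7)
The plan is to combine two standard ingredients: Rayleigh's theorem for the Friedrichs extension, and a density/approximation argument showing that Lipschitz compactly supported test functions yield the same infimum as smooth ones.

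First, I would recall that since $S$ is symmetric, bounded from below by $c$, and densely defined on $C_c^\infty(M)$, the Friedrichs extension $\bar S$ is self-adjoint with $\lambda_0(\bar S) \geq c$, and moreover Rayleigh's theorem gives
\[
\lambda_0(\bar S) = \inf_{f \in \mathrm{dom}(\bar S) \setminus \{0\}} \frac{\langle \bar S f, f \rangle}{\|f\|^2}.
\]
Since the Friedrichs extension preserves the lower bound of the quadratic form and $C_c^\infty(M)$ is a form core, this infimum equals $\inf_{f \in C_c^\infty(M) \setminus \{0\}} \langle Sf, f\rangle_{L^2(M)}/\|f\|^2_{L^2(M)}$. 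Integration by parts (Green's identity applied to the compactly supported $f$) converts $\langle Sf, f\rangle$ to $\int_M (\|\grad f\|^2 + V f^2)$, yielding the first equality $\lambda_0(S) = \inf_{f \in C_c^\infty(M) \setminus \{0\}} \mathcal{R}_S(f)$.

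Next I would show that the infimum is unchanged if we enlarge the class to $\Lip_c(M) \setminus \{0\}$. One direction is immediate from the inclusion $C_c^\infty(M) \subset \Lip_c(M)$. For the reverse, given $f \in \Lip_c(M)$, I would approximate $f$ by a sequence $f_n \in C_c^\infty(M)$ with $f_n \to f$ in $L^2(M)$ and $\grad f_n \to \grad f$ in $L^2(M)$. This is carried out by covering $\supp f$ by finitely many coordinate charts, applying a smooth partition of unity, and mollifying in each chart by convolution with a standard approximate identity; Rademacher's theorem guarantees that $\grad f$ exists almost everywhere and lies in $L^\infty$, and Meyers--Serrin-type smoothing preserves $L^2$ convergence of gradients. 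All supports can be kept inside a fixed compact neighborhood of $\supp f$, so $V f_n^2 \to V f^2$ in $L^1(M)$ by dominated convergence. Consequently $\mathcal{R}_S(f_n) \to \mathcal{R}_S(f)$, whence $\inf_{C_c^\infty} \mathcal{R}_S \leq \inf_{\Lip_c} \mathcal{R}_S$.

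The main obstacle is the smoothing step on a general (possibly non-complete, non-parallelizable) Riemannian manifold: one must produce a sequence of smooth compactly supported approximants whose gradients converge in $L^2$, not merely in $L^1_{\mathrm{loc}}$. This is handled by localizing to coordinate patches via a partition of unity subordinate to a neighborhood of $\supp f$, so that the manifold-level issue reduces to the standard Euclidean fact that mollification of a compactly supported Lipschitz function converges in $W^{1,2}$. The uniform bound $\|\grad f_n\|_{L^\infty} \leq \Lip(f) + o(1)$ together with fixed compact support then makes the convergence $\mathcal{R}_S(f_n) \to \mathcal{R}_S(f)$ routine, completing the proof.
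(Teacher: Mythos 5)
Your proposal is correct and follows the same route the paper indicates: Rayleigh's theorem for the Friedrichs extension combined with the fact that the extension preserves the lower bound of the form on $C^{\infty}_{c}(M)$ (the paper simply cites this, deferring details to a reference), plus the standard chart-by-chart mollification to pass from $\Lip_{c}(M)$ to $C^{\infty}_{c}(M)$. No substantive differences to report.
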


\begin{proposition}\label{disjoint}
Let $S$ be a Schr\"{o}dinger operator on a Riemannian manifold $M$. Then for any sequence $(f_{n})_{n \in \mathbb{N}} \subset C^{\infty}_{c}(M) \smallsetminus \{0\}$, with $\supp f_{n}$ pairwise disjoint, we have that
\[
\lambda_{0}^{\ess}(S) \leq \liminf_{n} \mathcal{R}_{S}(f_{n}).
\]
\end{proposition}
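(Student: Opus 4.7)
The plan is to argue by contradiction using the min-max characterization of eigenvalues of (the Friedrichs extension of) $S$ that lie below $\lambda_{0}^{\ess}(S)$. Set $\mu := \liminf_{n} \mathcal{R}_S(f_n)$ and suppose, toward a contradiction, that $\mu < \lambda_{0}^{\ess}(S)$. Then the spectrum of $S$ in $(-\infty,\mu]$ is a (possibly empty) finite collection of eigenvalues of finite multiplicity; let $N$ denote their total multiplicity. By the min-max principle, the $(N+1)$-th variational value
\[
\Lambda_{N+1} \;=\; \inf_{W} \;\sup_{f \in W \smallsetminus \{0\}} \mathcal{R}_{S}(f),
\]
where $W$ ranges over $(N+1)$-dimensional subspaces of $C^{\infty}_{c}(M)$, satisfies $\Lambda_{N+1} > \mu$.

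The key elementary observation is that the assumption of pairwise disjoint supports turns the Rayleigh quotient on the linear span of any finite subcollection $f_{n_{1}}, \dots, f_{n_{k}}$ into a weighted average of the individual Rayleigh quotients. Indeed, writing $f = \sum_{i} c_{i} f_{n_{i}}$, the disjointness of the supports makes every cross term vanish in the pointwise products $f^{2}$, $\|\grad f\|^{2}$ and $Vf^{2}$ (note that $\grad f_{n_{i}}$ vanishes off $\supp f_{n_{i}}$), so
\[
\mathcal{R}_{S}(f) \;=\; \frac{\sum_{i} c_{i}^{2} \int_{M}(\|\grad f_{n_{i}}\|^{2}+V f_{n_{i}}^{2})}{\sum_{i} c_{i}^{2} \int_{M} f_{n_{i}}^{2}}
\]
is a convex combination of the numbers $\mathcal{R}_{S}(f_{n_{i}})$. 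In particular, $\sup_{f \in W \smallsetminus \{0\}} \mathcal{R}_{S}(f) \leq \max_{i} \mathcal{R}_{S}(f_{n_{i}})$ on the span $W$.

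To close the argument, fix $\epsilon > 0$. By the definition of $\mu$, infinitely many indices $n$ satisfy $\mathcal{R}_{S}(f_{n}) < \mu + \epsilon$; I select any $N+1$ of them. Since they have pairwise disjoint supports they are linearly independent, so their span is an $(N+1)$-dimensional subspace of $C^{\infty}_{c}(M)$ on which $\mathcal{R}_{S} \leq \max_{i} \mathcal{R}_{S}(f_{n_{i}}) < \mu + \epsilon$ by the previous paragraph. Hence $\Lambda_{N+1} \leq \mu + \epsilon$, and letting $\epsilon \to 0$ yields $\Lambda_{N+1} \leq \mu$, contradicting $\Lambda_{N+1} > \mu$.

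The main subtlety I anticipate is the careful invocation of the min-max principle for the Friedrichs extension, specifically that (i) it suffices to test on subspaces of the form core $C^{\infty}_{c}(M)$ rather than the full form domain, and (ii) once all eigenvalues in $(-\infty,\mu]$ are exhausted in multiplicity, the next variational value is strictly greater than $\mu$. Both points are standard, but should be stated precisely in the setup of Section \ref{Section Preliminaries}; once in place, the disjoint-supports calculation is purely mechanical.
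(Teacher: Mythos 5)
Your argument is correct, and it rests on the same central computation as the paper's proof: pairwise disjointness of the supports kills all cross terms, so the Rayleigh quotient on the span of any finite subcollection of the $f_{n}$ is a convex combination of the individual Rayleigh quotients and hence bounded by their maximum. The two proofs diverge only in how this is converted into information about $\lambda_{0}^{\ess}(S)$. The paper takes the \emph{infinite}-dimensional span of $\{f_{n} : n \geq n_{0}\}$, on which the quotient is uniformly below $\lambda + \varepsilon$, and quotes \cite[Proposition 2.1]{MR592568}, which characterizes the bottom of the essential spectrum via such infinite-dimensional subspaces. You instead argue by contradiction through the min-max principle: letting $N$ be the total multiplicity of the spectrum in $(-\infty,\mu]$, you pick $N+1$ of the $f_{n}$ with quotient below $\mu + \varepsilon$ and use their span to defeat the $(N+1)$-st variational value. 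In effect you re-derive the Donnelly criterion rather than citing it, at the cost of justifying that the min-max values can be computed over subspaces of the form core $C^{\infty}_{c}(M)$ and that $\Lambda_{N+1} > \mu$ once the eigenvalues in $(-\infty,\mu]$ are exhausted; both points are standard and you flag them explicitly. The paper's route is shorter because the functional-analytic content is outsourced to a reference, while yours is self-contained modulo the min-max principle. Either is acceptable.
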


\begin{proof}
If the right hand side is infinite, there is nothing to prove. If it is finite, we denote it by $\lambda$, and after passing to a subsequence, if necessary, we may assume that $\mathcal{R}_{S}(f_{n}) \rightarrow \lambda$.
For any $\varepsilon > 0$, there exists $n_{0} \in \mathbb{N}$ such that $\mathcal{R}_{S}(f_{n}) < \lambda + \varepsilon$ for any $n \geq n_{0}$. Consider the infinite dimensional space $\mathcal{H}_{\varepsilon}$ spanned by $\{ f_{n} : n \geq n_{0} \}$. Any element $g \in \mathcal{H}_{\varepsilon} \smallsetminus \{0\}$ is of the form $g = \sum_{n=n_{0}}^{n_{0} + k} a_{n} f_{n}$ for some $k \in \mathbb{N}$ and $a_{n} \in \mathbb{R}$, $n_{0} \leq n \leq n_{0}+k$. The assumption that the supports of $f_{n}$ are pairwise disjoint yields that
\[
\mathcal{R}_{S}(g) = \frac{\sum_{n=n_{0}}^{n_{0} + k} a_{n}^{2} \int_{M} (\|\grad f_{n}\|^{2} + Vf_{n}^{2})}{\sum_{n=n_{0}}^{n_{0} + k} a_{n}^{2} \int_{M} f_{n}^{2}} \leq \max_{n_{0} \leq n \leq n_{0} + k} \mathcal{R}_{S}(f_{n}) < \lambda + \varepsilon.
\]
Since $\varepsilon > 0$ is arbitrary, we conclude from \cite[Proposition 2.1]{MR592568} that $\lambda_{0}^{\ess}(S) \leq \lambda$. \qed
\end{proof}\medskip

Let $\varphi$ be a positive, smooth function on $M$ such that $S \varphi = \lambda \varphi$ for some $\lambda \in \mathbb{R}$. Denote by $L^{2}_{\varphi}(M)$ the $L^{2}$-space of $M$ with respect to the measure $\varphi^{2} \text{d} \Vol$, where $\text{d} \Vol$ stands for the volume element of $M$ induced from its Riemannian metric. It is immediate to verify that the map $\mu_{\varphi} \colon L^{2}_{\varphi}(M) \to L^{2}(M)$, given by $\mu_{\varphi}(u) = \varphi u$, is an isometric isomorphism. The \textit{renormalization} of $S$ with respect to $\varphi$ is defined by
\[
S_{\varphi} := \mu_{\varphi}^{-1} \circ (S^{(F)} - \lambda) \circ \mu_{\varphi}, \text{ with } \mathcal{D}(S_{\varphi}) := \mu_{\varphi}^{-1}(\mathcal{D}(S^{(F)})),
\]
where $S^{(F)}$ is the Friedrichs extension of $S$ considered as in (\ref{operator}), and $\mathcal{D}(\cdot)$ denotes the domain of the operator. More details on the renormalization of Schr\"{o}dinger operators may be found in \cite[Section 7]{Mine}. Given $f \in C^{\infty}_{c}(M)$, it is straightforward to compute
\begin{equation}\label{renormalized exp}
S_{\varphi}f = \Delta f - \frac{2}{\varphi} \langle \grad \varphi , \grad f \rangle,
\end{equation}
which shows that $S_{\varphi}$ is a weighted Laplacian on $M$. The Rayleigh quotient of a non-zero $f \in C^{\infty}_{c}(M)$ with respect to $S_{\varphi}$ is given by
\[
\mathcal{R}_{S_{\varphi}}(f) := \frac{\langle S_{\varphi} f , f \rangle_{L^{2}_{\varphi}(M)}}{\| f  \|_{L^{2}_{\varphi}(M)}^{2}} = \frac{\int_{M} \| \grad f \|^{2} \varphi^{2}}{\int_{M} f^{2}\varphi^{2}}.
\]

\begin{lemma}\label{renormalized}
For any $f \in C^{\infty}_{c}(M) \smallsetminus \{0\}$ and $C \in \mathbb{R}$, we have that:
\begin{enumerate}[topsep=0pt,itemsep=-1pt,partopsep=1ex,parsep=0.5ex,leftmargin=*, label=(\roman*), align=left, labelsep=0em]
\item $\mathcal{R}_{S_{\varphi}}(f) = \mathcal{R}_{S}(\varphi f) - \lambda,$
\item $ \| (S_{\varphi} - C ) f \|_{L^{2}_{\varphi}(M)} =  \| (S - \lambda - C ) (\varphi f) \|_{L^{2}(M)}$.
\end{enumerate}
\end{lemma}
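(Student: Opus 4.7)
The plan is that both parts reduce to direct manipulations: (i) via an integration by parts combined with the eigenfunction equation $S\varphi = \lambda\varphi$, and (ii) via a pointwise product-rule identity.

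For part (i), I would start from the expansion
\[
\|\grad(\varphi f)\|^{2} = f^{2}\|\grad\varphi\|^{2} + 2f\varphi\langle\grad\varphi,\grad f\rangle + \varphi^{2}\|\grad f\|^{2},
\]
and rewrite the middle term using $2f\langle\grad\varphi,\grad f\rangle = \langle\grad\varphi,\grad f^{2}\rangle$, so that after multiplying by $\varphi$ and integrating by parts (there are no boundary contributions since $f \in C^{\infty}_{c}(M)$) one gets
\[
\int_{M} \varphi\langle\grad\varphi,\grad f^{2}\rangle = \int_{M} f^{2}\varphi\,\Delta\varphi - \int_{M} f^{2}\|\grad\varphi\|^{2}.
\]
Next I would substitute $\Delta\varphi = \lambda\varphi - V\varphi$, coming from $S\varphi = \lambda\varphi$, so that $\varphi\Delta\varphi = \lambda\varphi^{2} - V\varphi^{2}$. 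The $f^{2}\|\grad\varphi\|^{2}$ contribution cancels, and the sum collapses to
\[
\int_{M}\bigl(\|\grad(\varphi f)\|^{2} + V(\varphi f)^{2}\bigr) = \lambda\int_{M}(\varphi f)^{2} + \int_{M}\varphi^{2}\|\grad f\|^{2}.
\]
Dividing by $\|\varphi f\|_{L^{2}(M)}^{2} = \|f\|_{L^{2}_{\varphi}(M)}^{2}$ and comparing with the expression of $\mathcal{R}_{S_{\varphi}}$ given just before the lemma yields $\mathcal{R}_{S_{\varphi}}(f) = \mathcal{R}_{S}(\varphi f) - \lambda$.

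For part (ii), I would establish the pointwise identity $(S - \lambda - C)(\varphi f) = \varphi\,(S_{\varphi} - C)f$. Starting from the Leibniz rule
\[
\Delta(\varphi f) = \varphi\Delta f + f\Delta\varphi - 2\langle\grad\varphi,\grad f\rangle,
\]
adding $V\varphi f$ to both sides and using $f(\Delta\varphi + V\varphi) = \lambda\varphi f$, I get
\[
S(\varphi f) = \varphi\Delta f + \lambda\varphi f - 2\langle\grad\varphi,\grad f\rangle.
\]
Subtracting $(\lambda + C)\varphi f$ and factoring out $\varphi$, then recognizing the expression inside the parentheses as $S_{\varphi}f - Cf$ via formula (\ref{renormalized exp}), gives the desired pointwise identity. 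Squaring and integrating against $d\Vol$ yields
\[
\|(S - \lambda - C)(\varphi f)\|_{L^{2}(M)}^{2} = \int_{M}\varphi^{2}\,|(S_{\varphi} - C)f|^{2}\,d\Vol = \|(S_{\varphi} - C)f\|_{L^{2}_{\varphi}(M)}^{2}.
\]

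There is no real obstacle here; the only point that requires a little care is keeping track of the sign convention ($\Delta = -\diver\grad$) when applying the Leibniz rule and the integration by parts, and verifying that the $f^{2}\|\grad\varphi\|^{2}$ terms cancel exactly in (i). Compact support of $f$ ensures all boundary terms vanish, and positivity and smoothness of $\varphi$ guarantee that all pointwise expressions involving $1/\varphi$ are well defined.
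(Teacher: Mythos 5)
Your proof is correct: both computations check out with the paper's sign convention $\Delta=-\diver\grad$, and the pointwise identity $(S-\lambda-C)(\varphi f)=\varphi\,(S_{\varphi}-C)f$ in part (ii) is exactly the conjugation relation $S_{\varphi}=\mu_{\varphi}^{-1}\circ(S-\lambda)\circ\mu_{\varphi}$ that, together with the isometry of $\mu_{\varphi}$, constitutes the paper's (unwritten) one-line argument. You are simply making explicit the integration by parts and Leibniz-rule computations that the paper leaves to the reader, so this is essentially the same approach.
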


\begin{proof}
Both statements follow easily from the definition of $S_{\varphi}$ and the fact that $\mu_{\varphi}$ is an isometric isomorphism. \qed
\end{proof}\medskip

We now consider Schr\"{o}dinger operators on complete Riemannian manifolds. According to the next proposition, a Schr\"{o}dinger operator on a complete Riemannian manifold is essentially self-adjoint; that is, the Friedrichs extension of $S$ coincides with the closure of $S$ considered as in (\ref{operator}). This allows us to characterize the spectrum of the operator in terms of compactly supported smooth functions.

\begin{proposition}
Let $S$ be a Schr\"{o}dinger operator on a complete Riemannian manifold $M$. Then $S$ is essentially self-adjoint.
\end{proposition}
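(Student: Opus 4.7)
The plan is to reduce essential self-adjointness to an eigenvalue injectivity statement and then exploit completeness via distance cutoffs. Since $S$ is symmetric and bounded below by some constant $c \in \mathbb{R}$ on $C^\infty_c(M)$, both the closure $\overline{S}$ and the Friedrichs extension $S^{(F)}$ are self-adjoint extensions of $S$, and essential self-adjointness amounts to $\overline{S} = S^{(F)}$. By a standard criterion for semibounded symmetric operators, this is equivalent to showing $\ker(S^* - \mu) = \{0\}$ for some real $\mu < c$, since any such $\mu$ lies in the resolvent set of every self-adjoint extension with lower bound $c$.

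Fix such a $\mu$ and suppose $u \in \mathcal{D}(S^*)$ satisfies $(S^* - \mu)u = 0$. Because $V \in C^\infty(M)$, the equation $(\Delta + V - \mu)u = 0$ is elliptic with smooth coefficients in the distributional sense, so elliptic regularity gives $u \in C^\infty(M)$. Completeness allows us to invoke Hopf-Rinow, so that closed balls around a fixed base point $p$ are compact; define Lipschitz cutoffs $\chi_n := \max\{0, \min\{1, 2 - \rho_p/n\}\} \in \Lip_c(M)$, where $\rho_p$ is the distance from $p$, so that $\chi_n \equiv 1$ on $B(p,n)$, $\supp \chi_n \subset B(p, 2n)$, and $|\nabla \chi_n| \leq 1/n$ almost everywhere. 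Pairing the equation $Su = \mu u$ against $\chi_n^2 u$ and integrating by parts (legitimate since $u$ is smooth and $\chi_n^2 u$ has compact support) yields
\[
\int_M \chi_n^2 |\nabla u|^2 + \int_M V \chi_n^2 u^2 + 2 \int_M \chi_n u \, \langle \nabla \chi_n, \nabla u \rangle = \mu \int_M \chi_n^2 u^2.
\]

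Expanding $|\nabla(\chi_n u)|^2 = \chi_n^2 |\nabla u|^2 + 2\chi_n u \langle \nabla \chi_n, \nabla u\rangle + u^2 |\nabla \chi_n|^2$ and substituting, we get
\[
\int_M \left( |\nabla(\chi_n u)|^2 + V(\chi_n u)^2 \right) = \mu \int_M (\chi_n u)^2 + \int_M u^2 |\nabla \chi_n|^2.
\]
Since $\chi_n u \in \Lip_c(M)$, inequality (\ref{bounded from below}) extends to the left-hand side by density of $C^\infty_c(M)$ in $\Lip_c(M)$ in the $H^1$-norm (via mollification in coordinate charts), yielding $c \int_M (\chi_n u)^2 \leq \mu \int_M \chi_n^2 u^2 + \int_M u^2 |\nabla \chi_n|^2$. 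Rearranging and using $|\nabla \chi_n| \leq 1/n$,
\[
(c - \mu) \int_M \chi_n^2 u^2 \leq \int_M u^2 |\nabla \chi_n|^2 \leq \frac{1}{n^2} \|u\|_{L^2(M)}^2.
\]
Letting $n \to \infty$, monotone convergence gives $(c-\mu) \|u\|_{L^2(M)}^2 \leq 0$, and since $c - \mu > 0$ we conclude $u = 0$. The main technical point is the extension of (\ref{bounded from below}) from smooth to Lipschitz compactly supported test functions; this is the expected obstacle, but it is handled by the standard mollification argument in charts, since the support of $\chi_n u$ sits inside a compact set where everything reduces to local Euclidean considerations.
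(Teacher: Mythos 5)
Your proof is correct, but it takes a genuinely different route from the paper. The paper argues by reduction: since $S$ is bounded below, the Fischer-Colbrie--Schoen theorem provides a positive smooth $\varphi$ with $S\varphi=\lambda\varphi$; renormalizing by $\varphi$ turns $S-\lambda$ into the weighted Laplacian $S_{\varphi}$ of (\ref{renormalized exp}), whose essential self-adjointness on a complete weighted manifold is Grigor'yan's theorem, and the isometric isomorphism $\mu_{\varphi}$ transports the conclusion back to $S$. You instead give the classical direct argument: the deficiency criterion for semibounded symmetric operators reduces the claim to $\ker(S^{*}-\mu)=\{0\}$ for $\mu<c$, elliptic regularity makes a putative kernel element smooth, and the Gaffney-type cutoff computation (pairing against $\chi_{n}^{2}u$, completing the square to $|\nabla(\chi_{n}u)|^{2}$, and applying the form lower bound (\ref{bounded from below}) extended to $\Lip_{c}(M)$) forces $u=0$; completeness enters through the existence of the cutoffs $\chi_{n}$ with $\|\nabla\chi_{n}\|\le 1/n$. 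Your approach is self-contained modulo standard facts and avoids invoking the existence of a positive eigenfunction, while the paper's approach is shorter on the page because it reuses the renormalization machinery already set up for Lemma \ref{renormalized} and outsources the role of completeness to the cited theorem (whose proof is, at bottom, a cutoff argument of the same flavor as yours). The one step you rightly flag --- extending (\ref{bounded from below}) from $C^{\infty}_{c}(M)$ to $\Lip_{c}(M)$ --- is handled adequately by local mollification on the compact support, and is in any case implicitly assumed by the paper when it allows Lipschitz test functions in Proposition \ref{bottom}.
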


\begin{proof}
By virtue of \cite[Theorem 1]{MR562550} and (\ref{bounded from below}), we have that there exists $\lambda \in \mathbb{R}$ and a positive $\varphi \in C^{\infty}(M)$ such that $S \varphi = \lambda \varphi$. Denote by $S_{\varphi}$ the renormalization of $S$ with respect to $\varphi$. Then \cite[Theorem 2.2]{MR2218016} implies that the operator
\[
S_{\varphi} \colon C^{\infty}_{c}(M) \subset L^{2}_{\varphi}(M) \to L^{2}_{\varphi}(M)
\]
is essentially self-adjoint, $M$ being complete. Taking into account that this operator corresponds to $S - \lambda$ (considered as in (\ref{operator})) under the isometric isomorphism $\mu_{\varphi}$, we derive that $S$ is essentially self-adjoint. \qed
\end{proof}\medskip

Let $S$ be a Schr\"{o}dinger operator on a complete Riemannian manifold $M$. For $\lambda \in \mathbb{R}$, a sequence $(f_{n})_{n \in \mathbb{N}} \subset C^{\infty}_{c}(M) \smallsetminus \{0\}$ is called \textit{characteristic sequence} for $S$ and $\lambda$, if
\[
\frac{\| (S - \lambda)f_{n} \|_{L^{2}(M)}}{\| f_{n} \|_{L^{2}(M)}} \rightarrow 0, \text{ as } n\rightarrow + \infty.
\]
The next propositions follow from the Decomposition Principle \cite{MR544241} and the fact that the spectrum of a self-adjoint operator consists of approximate eigenvalues of the operator.

\begin{proposition}\label{spectrum}
Let $S$ be a Schr\"{o}dinger operator on a complete Riemannian manifold $M$, and consider $\lambda \in \mathbb{R}$. Then:
\begin{enumerate}[topsep=0pt,itemsep=-1pt,partopsep=1ex,parsep=0.5ex,leftmargin=*, label=(\roman*), align=left, labelsep=0em]
\item $\lambda \in \sigma(S)$ if and only there exists a characteristic sequence for $S$ and $\lambda$,
\item $\lambda \in \sigma_{\ess}(S)$ if and only if there exists a characteristic sequence $(f_{n})_{n \in \mathbb{N}}$ for $S$ and $\lambda$, with $\supp f_{n}$ pairwise disjoint.
\end{enumerate}
\end{proposition}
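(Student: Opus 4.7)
The plan is to combine the essential self-adjointness established just above with the Weyl criterion for self-adjoint operators and the Decomposition Principle of \cite{MR544241}. Write $\bar{S}$ for the closure of the operator in (\ref{operator}); by the previous proposition this is the unique self-adjoint extension and $C^{\infty}_{c}(M)$ is a core for $\bar{S}$.

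For part (i), I would invoke the standard Weyl criterion: $\lambda \in \sigma(\bar{S})$ if and only if there exist $u_{n} \in \mathcal{D}(\bar{S})$ with $\|u_{n}\|_{L^{2}(M)} = 1$ and $\|(\bar{S} - \lambda) u_{n}\|_{L^{2}(M)} \to 0$. Since $C^{\infty}_{c}(M)$ is dense in $\mathcal{D}(\bar{S})$ in the graph norm, a diagonal approximation produces $f_{n} \in C^{\infty}_{c}(M) \smallsetminus \{0\}$ with the same property, i.e.\ a characteristic sequence for $S$ and $\lambda$. The converse is immediate, as a characteristic sequence directly furnishes approximate eigenvectors for $\bar{S}$, so that $\lambda \in \sigma(\bar{S}) = \sigma(S)$.

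For part (ii), I would use the version of the Weyl criterion for the essential spectrum: $\lambda \in \sigma_{\ess}(\bar{S})$ if and only if there exists a sequence $u_{n} \in \mathcal{D}(\bar{S})$ with $\|u_{n}\|_{L^{2}(M)} = 1$, $u_{n} \rightharpoonup 0$ weakly, and $\|(\bar{S}-\lambda) u_{n}\|_{L^{2}(M)} \to 0$. The Decomposition Principle of \cite{MR544241} then ensures that, given any compact $K \subset M$, one can find such approximate eigenvectors supported in $M \smallsetminus K$. Fixing a compact exhaustion $K_{1} \subset K_{2} \subset \cdots$ of $M$, I would apply this statement for each $K_{n}$ and use the fact that $C^{\infty}_{c}(M \smallsetminus K_{n})$ is a core for $\bar{S}$ restricted to $M \smallsetminus K_{n}$, to obtain $f_{n} \in C^{\infty}_{c}(M \smallsetminus K_{n}) \smallsetminus \{0\}$ with $\|(S - \lambda) f_{n}\|_{L^{2}(M)} / \|f_{n}\|_{L^{2}(M)} \to 0$. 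Since each $f_{n}$ has compact support inside $M \smallsetminus K_{n}$, passing to a subsequence makes the supports pairwise disjoint. Conversely, given a characteristic sequence $(f_{n})$ with pairwise disjoint supports, the normalized functions $f_{n} / \|f_{n}\|_{L^{2}(M)}$ are orthonormal in $L^{2}(M)$, hence converge weakly to zero, and the Weyl criterion yields $\lambda \in \sigma_{\ess}(\bar{S}) = \sigma_{\ess}(S)$.

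The main obstacle is the forward direction of (ii): producing approximate eigenvectors with pairwise disjoint supports. This is precisely where the Decomposition Principle (and hence completeness of $M$) is essential, since it lets us push the supports of the approximate eigenvectors arbitrarily far toward infinity while preserving the approximate eigenvalue relation. The remaining issues---density of the core in graph norm and orthonormality implying weak convergence to zero---are routine.
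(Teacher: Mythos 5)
Your argument is correct and is exactly the route the paper takes: the paper offers no proof of this proposition beyond the remark that it follows from the Decomposition Principle of Donnelly--Li and the fact that the spectrum of a self-adjoint operator consists of approximate eigenvalues, which is precisely your combination of the Weyl criterion, essential self-adjointness (so that $C^{\infty}_{c}(M)$ is an operator core for $\bar{S}$), and the pushing of supports past a compact exhaustion followed by extraction of a subsequence with pairwise disjoint supports. The one imprecision is the phrase ``$C^{\infty}_{c}(M \smallsetminus K_{n})$ is a core for $\bar{S}$ restricted to $M \smallsetminus K_{n}$'' --- the Dirichlet (Friedrichs) realization on the incomplete manifold $M \smallsetminus K_{n}$ has $C^{\infty}_{c}(M \smallsetminus K_{n})$ as a form core but not automatically as an operator core --- yet the conclusion you extract, namely the existence of $f_{n} \in C^{\infty}_{c}(M \smallsetminus K_{n}) \smallsetminus \{0\}$ with $\| (S-\lambda) f_{n} \| / \| f_{n} \| \to 0$, is exactly what the proof of the Decomposition Principle produces (by cutting off a weakly null Weyl sequence away from $K_{n}$ using local elliptic estimates), so the argument stands.
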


\begin{proposition}[{\cite[Proposition 3.2]{MR2891739}}]\label{exhausting}
Let $S$ be a Schr\"{o}dinger operator on a complete Riemannian manifold $M$, and $(K_{n})_{n \in \mathbb{N}}$ an exhausting sequence of $M$ consisting of compact subsets of $M$. Then the bottom of the essential spectrum of $S$ is given by
\[
\lambda_{0}^{\ess}(S) = \lim_{n} \lambda_{0}(S, M \smallsetminus K_{n}),
\]
where $\lambda_{0}(S, M \smallsetminus K_{n})$ is the bottom of the spectrum of $S$ on $M \smallsetminus K_{n}$. In particular, the spectrum of $S$ is discrete if and only if the right hand side limit is infinite.
\end{proposition}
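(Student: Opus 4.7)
The plan is to prove two opposite inequalities separately, relying on the fact that the sequence $(\lambda_{0}(S, M \smallsetminus K_{n}))_{n \in \mathbb{N}}$ is non-decreasing: the variational characterization of Proposition \ref{bottom} is taken over a shrinking class of admissible test functions as $n$ grows. Hence $L := \lim_{n} \lambda_{0}(S, M \smallsetminus K_{n})$ exists in $(-\infty, +\infty]$, and it suffices to show $L \leq \lambda_{0}^{\ess}(S) \leq L$.

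For the first inequality, if $\lambda_{0}^{\ess}(S) = +\infty$ there is nothing to prove, so assume $\lambda := \lambda_{0}^{\ess}(S)$ is finite. Completeness of $M$ and Proposition \ref{spectrum}(ii) produce a characteristic sequence $(f_{k}) \subset C^{\infty}_{c}(M) \smallsetminus \{0\}$ for $S$ and $\lambda$ with pairwise disjoint supports, which in particular satisfies $\mathcal{R}_{S}(f_{k}) \to \lambda$. For each fixed $n$, the compactness of $K_{n}$ together with the disjointness of the supports forces $\supp f_{k} \subset M \smallsetminus K_{n}$ for all but finitely many $k$. Every such $f_{k}$ is admissible in the variational characterization of $\lambda_{0}(S, M \smallsetminus K_{n})$, so $\lambda_{0}(S, M \smallsetminus K_{n}) \leq \liminf_{k} \mathcal{R}_{S}(f_{k}) = \lambda$. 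Letting $n \to \infty$ yields $L \leq \lambda$.

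For the reverse inequality, assume $L < \infty$. Applying Proposition \ref{bottom} on the (possibly disconnected) open manifold $M \smallsetminus K_{n}$, I pick $g_{n} \in C^{\infty}_{c}(M \smallsetminus K_{n}) \smallsetminus \{0\}$ with $\mathcal{R}_{S}(g_{n}) < \lambda_{0}(S, M \smallsetminus K_{n}) + 1/n$. Since $\supp g_{n}$ is compact and $(K_{j})$ exhausts $M$, there exists $N_{n}$ with $\supp g_{n} \subset K_{N_{n}}$. Passing to a subsequence $(g_{n_{k}})$ with $n_{k+1} > N_{n_{k}}$, the supports become pairwise disjoint in $M$, and Proposition \ref{disjoint} then gives $\lambda_{0}^{\ess}(S) \leq \liminf_{k} \mathcal{R}_{S}(g_{n_{k}}) \leq L$. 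The ``in particular'' claim is immediate from the convention that $\lambda_{0}^{\ess}(S) = +\infty$ exactly when $\sigma_{\ess}(S)$ is empty.

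I anticipate no serious obstacle, as this is essentially Persson's principle formulated for Schr\"odinger operators; the only mild technical point is the subsequence extraction in the second step, which requires only the exhausting property of $(K_{n})$ and ensures the disjoint-support hypothesis of Proposition \ref{disjoint}.
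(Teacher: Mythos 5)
The paper does not actually prove this proposition---it is quoted from \cite[Proposition 3.2]{MR2891739}---so there is no internal proof to compare against; I am judging your argument on its own. Your proof of the inequality $\lambda_{0}^{\ess}(S) \leq L$ is essentially correct: the subsequence extraction should take $n_{k+1} > \max\{N_{n_{1}}, \dots, N_{n_{k}}\}$ (or note that $N_{n}$ may be chosen non-decreasing) to get \emph{pairwise} disjointness rather than only disjointness of consecutive supports, but that is a trivial repair, and Proposition \ref{disjoint} then applies as you say.

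The first inequality, however, contains a genuine gap. You claim that ``the compactness of $K_{n}$ together with the disjointness of the supports forces $\supp f_{k} \subset M \smallsetminus K_{n}$ for all but finitely many $k$.'' This is false: a compact set can meet infinitely many pairwise disjoint compact sets. For instance, in $M = \mathbb{R}^{2}$ take disjoint closed balls accumulating at the origin, or supports that each touch $\overline{B(0,1)}$ while stretching off to infinity; pairwise disjointness gives no control on how the supports sit relative to a fixed compactum. Proposition \ref{spectrum}(ii) only guarantees disjoint supports, not that the supports eventually leave every compact set, so you cannot feed the $f_{k}$ into the variational characterization of $\lambda_{0}(S, M \smallsetminus K_{n})$. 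The standard way to close this gap is not a disjointness argument at all but the Decomposition Principle of Donnelly--Li \cite{MR544241} (which the paper invokes elsewhere): for each $n$ one has $\sigma_{\ess}(S) = \sigma_{\ess}(S, M \smallsetminus K_{n})$, hence
\[
\lambda_{0}^{\ess}(S) = \lambda_{0}^{\ess}(S, M \smallsetminus K_{n}) \geq \lambda_{0}(S, M \smallsetminus K_{n}),
\]
and letting $n \to \infty$ gives $\lambda_{0}^{\ess}(S) \geq L$. (Alternatively, one can run an IMS-type cutoff argument, multiplying a Weyl sequence by functions vanishing on $K_{n}$ and estimating the error; either way, some input beyond disjoint supports is required.) With that substitution the proof is complete.
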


The following property of the bottom of the essential spectrum is an immediate consequence of Propositions \ref{bottom} and \ref{exhausting}.

\begin{corollary}\label{bottom of essential spectrum}
Let $S$ be a Schr\"{o}dinger operator on a complete Riemannian manifold $M$. Then there exists $(f_{n})_{n \in \mathbb{N}} \subset C^{\infty}_{c}(M) \smallsetminus \{0\}$, with $\supp f_{n}$ pairwise disjoint, such that $\mathcal{R}_{S}(f_{n}) \rightarrow \lambda_{0}^{\ess}(S)$.
\end{corollary}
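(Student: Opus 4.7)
The plan is to combine the two invoked propositions via an inductive diagonal-style construction that forces the successive supports to migrate out to infinity. Fix an exhausting sequence $(K_n)_{n \in \mathbb{N}}$ of compact subsets of $M$ with $K_n \subset \inte(K_{n+1})$ and $\bigcup_n K_n = M$. By Proposition \ref{exhausting}, $\lambda_0(S, M \smallsetminus K_n) \to \lambda_0^{\ess}(S)$. On each open submanifold $M \smallsetminus K_n$, the Schr\"odinger operator $S$ is still bounded from below (extension by zero shows this inherits from (\ref{bounded from below}) with the same constant $c$), so Proposition \ref{bottom} applies on $M \smallsetminus K_n$.

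I construct $(f_k)$ and an increasing sequence of indices $n_0 < n_1 < n_2 < \cdots$ inductively. Put $n_0 := 1$. Given $n_{k-1}$, invoke Proposition \ref{bottom} on $M \smallsetminus K_{n_{k-1}}$ to pick $f_k \in C^{\infty}_{c}(M \smallsetminus K_{n_{k-1}}) \smallsetminus \{0\}$ with
\[
\mathcal{R}_{S}(f_k) < \lambda_{0}(S, M \smallsetminus K_{n_{k-1}}) + \tfrac{1}{k}
\]
in the case $\lambda_0^{\ess}(S) < +\infty$, or with $\mathcal{R}_S(f_k) \geq k$ in the case $\lambda_0^{\ess}(S) = +\infty$ (possible since $\lambda_0(S, M \smallsetminus K_{n_{k-1}}) \to +\infty$ then). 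Because $\supp f_k$ is compact in $M$ and the $K_n$ exhaust $M$, choose $n_k > n_{k-1}$ with $\supp f_k \subset K_{n_k}$. Then $\supp f_k \subset K_{n_k} \smallsetminus K_{n_{k-1}}$, which is disjoint from $\supp f_j$ for every $j < k$.

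For convergence of the Rayleigh quotients, Proposition \ref{exhausting} together with the upper bound from the construction gives $\limsup_k \mathcal{R}_S(f_k) \leq \lambda_0^{\ess}(S)$ (or $\mathcal{R}_S(f_k) \to +\infty$ in the discrete-spectrum case). The matching lower bound $\liminf_k \mathcal{R}_S(f_k) \geq \lambda_0^{\ess}(S)$ is immediate from Proposition \ref{disjoint} applied to the disjointly supported family $(f_k)$. I expect no serious obstacle here; the only minor technical point is confirming that (\ref{bounded from below}) transfers from $M$ to each $M \smallsetminus K_n$ so that Proposition \ref{bottom} may legitimately be applied on the complements, and handling the discrete-spectrum case as a separate bookkeeping bullet.
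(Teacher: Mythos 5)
Your proposal is correct and is essentially the argument the paper intends: the paper gives no written proof, stating only that the corollary is an immediate consequence of Propositions \ref{bottom} and \ref{exhausting}, and your exhaustion-plus-induction construction (choosing $f_k$ supported in $M \smallsetminus K_{n_{k-1}}$ with near-optimal Rayleigh quotient and then picking $n_k$ so that $\supp f_k \subset K_{n_k}$) is exactly that route. The only cosmetic remark is that the lower bound $\liminf_k \mathcal{R}_S(f_k) \geq \lambda_0^{\ess}(S)$ already follows from $\mathcal{R}_S(f_k) \geq \lambda_0(S, M \smallsetminus K_{n_{k-1}})$ together with Proposition \ref{exhausting}, so the appeal to Proposition \ref{disjoint} is not needed.
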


\subsection{Riemannian submersions}

Let $M_{1}$, $M_{2}$ be Riemannian manifolds with $\dim (M_{2}) > \dim (M_{1})$. A surjective, smooth map $p \colon M_{2} \to M_{1}$ is called a \textit{submersion} if its differential is surjective at any point $y \in M_{2}$. For any $x \in M_{1}$, the \textit{fiber} $F_{x} := p^{-1}(x)$ over $x$ is a possibly non-connected submanifold of $M_{2}$. The kernel of $p_{*y}$ is called the \textit{vertical space} at $y$, and is denoted by $(T_{y}M_{2})^{v}$. Evidently, the vertical space at $y$ is the tangent space of the fiber $F_{p(y)}$. The \textit{horizontal space} $(T_{y}M_{2})^{h}$ at $y$ is defined as the orthogonal complement of the vertical space at $y$. The submersion $p$ is called \textit{Riemannian submersion} if the restriction $p_{*y} \colon (T_{y}M_{2})^{h} \to T_{p(y)}M_{1}$ is an isometry for any $y \in M_{2}$. For more details on Riemannian submersions, see for example \cite{MR2110043}.

Let $p \colon M_{2} \to M_{1}$ be a Riemannian submersion. A vector field $Y$ on $M_{2}$ is called \textit{horizontal} (\textit{vertical}) if $Y(y)$ belongs to the horizontal (vertical, respectively) space at $y$ for any $y \in M_{2}$. It is clear that any vector field $Y$ on $M_{2}$ is written uniquely as $Y = Y^{h} + Y^{v}$, with $Y^{h}$ horizontal and $Y^{v}$ vertical. Any vector field $X$ on $M_{1}$ has a unique horizontal lift on $M_{2}$, which is denoted by $\tilde{X}$; that is, $\tilde{X}$ is horizontal and $p_{*}\tilde{X} = X$. A vector field $Y$ on $M_{2}$ is called \textit{basic} if $Y = \tilde{X}$ for some vector field $X$ on $M_{1}$.

We denote by $H$ the (unnormalized) mean curvature of the fibers, which is defined by
\[
H(y) := \sum_{i=1}^{k} \alpha(e_{i},e_{i}),
\]
where $\alpha(\cdot , \cdot)$ is the second fundamental form of $F_{p(y)}$, and $\{e_{i}\}_{i=1}^{k}$ is an orthonormal basis of $(T_{y}M_{2})^{v}$. The Riemannian submersion $p$ has \textit{minimal fibers}, \textit{fibers of basic mean curvature}, or \textit{fibers of bounded mean curvature} if $H=0$, $H$ is basic, or $\| H \|$ is bounded, respectively.

Given a function $f \colon M_{1} \to \mathbb{R}$, the function $\tilde{f} := f \circ p$ is called the \textit{lift} of $f$ on $M_{2}$. The next lemma provides a simple expression for the gradient and the Laplacian of a lifted smooth function on $M_{2}$.

\begin{lemma}\label{lift}
Let $p \colon M_{2} \to M_{1}$ be a Riemannian submersion. Consider $f \in C^{\infty}(M_{1})$ and its lift $\tilde{f}$ on $M_{2}$. Then we have that:
\begin{enumerate}[topsep=0pt,itemsep=-1pt,partopsep=1ex,parsep=0.5ex,leftmargin=*, label=(\roman*), align=left, labelsep=0em]
\item $\grad \tilde{f} = \widetilde{\grad f}$,
\item $\Delta \tilde{f} = \widetilde{\Delta f} + \langle \widetilde{\grad f} , H \rangle$.
\end{enumerate}
\end{lemma}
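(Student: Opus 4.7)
The plan is a direct local computation in an adapted orthonormal frame, proving (i) first and then using it as input for (ii).

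For (i), I would show that the two vector fields $\grad \tilde{f}$ and $\widetilde{\grad f}$ have the same inner product with an arbitrary $v \in T_y M_2$. Writing $v = v^{h} + v^{v}$, note that $v^{v}(\tilde f) = 0$ because $\tilde f = f\circ p$ is constant along fibers, so $d\tilde f(v) = df(p_* v^{h})$. Since $\widetilde{\grad f}$ is horizontal and $p_{*y}$ is an isometry on $(T_yM_2)^{h}$, the latter equals $\langle \widetilde{\grad f}, v^{h}\rangle = \langle \widetilde{\grad f}, v\rangle$. This forces $\grad \tilde f = \widetilde{\grad f}$ and, as a byproduct, tells us that $\grad\tilde f$ is horizontal.

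For (ii), I would pick a point $y \in M_2$, an orthonormal basis $\{Y_j\}_{j=1}^{m}$ of $T_{p(y)}M_1$ which is geodesic at $p(y)$, and an orthonormal basis $\{e_i\}_{i=1}^{k}$ of $(T_yM_2)^{v}$. Setting $X_j := \tilde{Y}_j$, the family $\{e_i\} \cup \{X_j\}$ is an orthonormal basis of $T_y M_2$, and I would compute
\[
\Delta \tilde f (y) = -\sum_{i}\bigl(e_i e_i \tilde f - (\nabla_{e_i}e_i)\tilde f\bigr)(y) -\sum_{j}\bigl(X_j X_j \tilde f - (\nabla_{X_j}X_j)\tilde f\bigr)(y).
\]
The vertical terms $e_i e_i \tilde f$ vanish by (i) (since $\grad\tilde f$ is horizontal, every vertical derivative of $\tilde f$ is zero), and the remaining piece $\sum_i(\nabla_{e_i}e_i)\tilde f = \sum_i \langle \grad\tilde f, (\nabla_{e_i}e_i)^{h}\rangle = \langle \widetilde{\grad f}, H\rangle$ by the definition of the second fundamental form and of $H$.

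For the horizontal sum, I would use that $X_j \tilde f = \langle \grad\tilde f, X_j\rangle = \widetilde{Y_j f}$ by (i), hence $X_j X_j \tilde f = \widetilde{Y_j Y_j f}$; and the standard Riemannian submersion identity for basic fields, $(\nabla_{\tilde Y_j}\tilde Y_j)^{h} = \widetilde{\nabla_{Y_j}Y_j}$, combined with the horizontality of $\grad\tilde f$, gives $(\nabla_{X_j}X_j)\tilde f = \widetilde{(\nabla_{Y_j}Y_j) f}$. At $p(y)$ the frame $\{Y_j\}$ is geodesic, so the horizontal contribution reduces to $-\sum_j \widetilde{Y_j Y_j f}(y) = \widetilde{\Delta f}(y)$. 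Adding the two pieces yields $\Delta\tilde f = \widetilde{\Delta f} + \langle \widetilde{\grad f}, H\rangle$.

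The only non-bookkeeping ingredient is the O'Neill-type identity $(\nabla_{\tilde X}\tilde Y)^{h} = \widetilde{\nabla_X Y}$ for basic fields; I would simply quote it from a standard reference such as \cite{MR2110043}. The rest is careful tracking of horizontal/vertical decompositions and the sign convention $\Delta = -\diver\grad$.
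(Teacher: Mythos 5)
Your proof is correct and carries out exactly the ``straightforward computations'' that the paper outsources to \cite[Subsection 2.2]{MR2891739}: part (i) by testing both sides against a decomposed tangent vector, part (ii) by splitting an adapted orthonormal frame into vertical fields and basic horizontal lifts, so that the vertical second-order terms vanish, the vertical connection terms produce $\langle \widetilde{\grad f}, H \rangle$ via the second fundamental form, and the horizontal part projects to $\widetilde{\Delta f}$. The only cosmetic point is that the $e_{i}$ must be extended to local vertical vector fields for $e_{i}e_{i}\tilde{f}$ to be meaningful (after which it vanishes identically, as you use); with that understood, the argument is complete and matches the approach of the cited reference.
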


\begin{proof}
Both statements follow from straightforward computations, which may be found for instance in \cite[Subsection 2.2]{MR2891739}.\qed
\end{proof}\medskip

Recall that the fibers of a Riemannian submersion are submanifolds of the total space.
This allows us to consider the spectrum of a fiber, with respect to the Riemannian metric inherited by the ambient space. In particular, we regard the bottom of the spectrum of the fiber as a function on the base space. According to the next lemma, this function is upper semi-continuous, while Example \ref{discontinuous} demonstrates that it does not have to be continuous, even if the fibers are minimal.

\begin{lemma}\label{measurable}
Let $p \colon M_{2} \to M_{1}$ be a Riemannian submersion. Then the function $\lambda_{0}(F_{x})$ is upper semi-continuous with respect to $x \in M_{1}$.
\end{lemma}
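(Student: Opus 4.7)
The plan is, given $x_0 \in M_1$ and $\varepsilon > 0$, to produce a neighborhood $U$ of $x_0$ in $M_1$ such that $\lambda_0(F_x) < \lambda_0(F_{x_0}) + \varepsilon$ for every $x \in U$. The strategy is to transplant a nearly optimal test function on $F_{x_0}$ onto each nearby fiber $F_x$ via a local trivialization of the submersion, and then invoke Proposition \ref{bottom}. Applying Proposition \ref{bottom} on $F_{x_0}$, I first pick $f \in C^\infty_c(F_{x_0}) \smallsetminus \{0\}$, supported in a single connected component of $F_{x_0}$, with Rayleigh quotient $\mathcal{R}(f) < \lambda_0(F_{x_0}) + \varepsilon/2$, and set $K := \supp f$.

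To transplant $f$, I would construct a smooth map $\Psi: U \times W \to M_2$, where $U$ is a neighborhood of $x_0$ in $M_1$ and $W$ is a relatively compact open neighborhood of $K$ in $F_{x_0}$, satisfying $p \circ \Psi(x,w) = x$ and $\Psi(x_0, \cdot) = \mathrm{id}_W$. This is done via horizontal lifts: fix a coordinate chart around $x_0$, let $\gamma_x : [0,1] \to M_1$ be the coordinate segment joining $x_0$ to $x$, and set $\Psi(x,w) := \tilde{\gamma}_{x,w}(1)$, the time-$1$ point of the horizontal lift of $\gamma_x$ starting at $w$. Smooth dependence of ODE solutions on initial data and parameters, together with the compactness of $\overline{W}$, ensures that on a sufficiently small $U$ the lifts exist up to time $1$ for all $w \in W$, that $\Psi$ is smooth, and, after shrinking $U$ once more, that each $\Psi_x := \Psi(x, \cdot) : W \to F_x$ is a smooth embedding. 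Define $f_x \in C^\infty_c(F_x)$ by $f_x \circ \Psi_x = f$ on $W$ and $f_x \equiv 0$ outside $\Psi_x(W)$.

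Writing $g_x := \Psi_x^*(g_{F_x})$ for the pullback to $W$ of the induced Riemannian metric on $F_x$, the smoothness of $\Psi$ gives continuity of $x \mapsto g_x$ in the $C^0$-topology on $\overline{W}$, with $g_{x_0}$ being the original metric on $W$. Consequently,
\[
\mathcal{R}_{F_x}(f_x) \;=\; \frac{\int_W \| \grad f \|_{g_x}^2 \, \text{d}\vol_{g_x}}{\int_W f^2 \, \text{d}\vol_{g_x}} \;\longrightarrow\; \mathcal{R}(f) \qquad \text{as } x \to x_0,
\]
so for $x$ in a possibly smaller neighborhood of $x_0$ one has $\mathcal{R}_{F_x}(f_x) < \lambda_0(F_{x_0}) + \varepsilon$, and Proposition \ref{bottom} applied on $F_x$ yields $\lambda_0(F_x) < \lambda_0(F_{x_0}) + \varepsilon$, which gives upper semi-continuity at $x_0$. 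The main technical point is the construction of $\Psi$: one must invoke smooth dependence for the horizontal-lift ODE and use the compactness of $K$ to extract a uniform lower bound on the existence times of lifts. The remaining continuity of the Rayleigh quotient is then routine.
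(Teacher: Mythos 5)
Your proof is correct, and its overall strategy coincides with the paper's: choose $f \in C^{\infty}_{c}(F_{x_{0}}) \smallsetminus \{0\}$ with $\mathcal{R}(f) < \lambda_{0}(F_{x_{0}}) + \varepsilon$, manufacture from it non-trivial test functions on all nearby fibers whose Rayleigh quotients converge to $\mathcal{R}(f)$, and conclude with Proposition \ref{bottom}. Where you genuinely differ is the mechanism for moving $f$ to the nearby fibers. The paper simply extends $f$ to an element of $C^{\infty}_{c}(M_{2})$, observes that $f|_{F_{y}} \not\equiv 0$ for $y$ in a neighborhood of $x_{0}$ (the set of such $y$ is open since $p$ is an open map), and invokes the continuity of $y \mapsto \mathcal{R}(f|_{F_{y}})$ --- a fact it asserts rather than proves, but which follows from the smoothness of fiber averages of compactly supported functions used in Section \ref{Section Lower bound}. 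You instead build a local trivialization $\Psi$ by horizontal lifts and transplant $f$ itself; this costs you the ODE bookkeeping (uniform existence time over the compact set $\overline{W}$; note that injectivity of $\Psi_{x}$ is automatic from uniqueness of the reversed lift, and an injective immersion between equidimensional manifolds is a diffeomorphism onto an open set, so $f_{x}$ is indeed smooth with compact support), but it buys a completely explicit and self-contained continuity argument: the pullback metrics converge in $C^{0}$ on the fixed compact $\overline{W}$, hence the Rayleigh quotients converge. Both routes are valid; the paper's is shorter, while yours avoids presupposing any regularity of fiberwise integrals of an ambient extension.
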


\begin{proof}
Let $C>0$ and $x \in M_{1}$ such that $\lambda_{0}(F_{x}) < C$. We know from Proposition \ref{bottom} that there exists $f \in C^{\infty}_{c}(F_{x}) \smallsetminus \{0\}$ such that $\mathcal{R}(f) < C$. Observe that $f$ can be extended to an $f \in C^{\infty}_{c}(M_{2})$, and there exists an open neighborhood $U$ of $x$ such that $f$ is non-zero on $F_{y}$ for any $y \in U$. Then $\mathcal{R}(f|_{F_{y}})$ depends continuously on $y \in U$, which shows that there exists an open neighborhood $U^{\prime}$ of $x$ such that $\mathcal{R}(f|_{F_{y}}) < C$ for any $y \in U^{\prime}$. Applying Proposition \ref{bottom} to the Riemannian manifold $F_{y}$ gives that $\lambda_{0}(F_{y}) < C$ for any $y \in U^{\prime}$, as we wished. \qed
\end{proof}

\section{Submersions with fibers of bounded mean curvature}\label{Section Lower bound}

The aim of this section is to prove Theorem \ref{lower bound}. Let $p \colon M_{2} \to M_{1}$ be a Riemannian submersion of possibly non-complete Riemannian manifolds. As in \cite{MR2891739, Bordoni}, for $f \in C^{\infty}_{c}(M_{2})$, its \textit{average} $f_{\av}$ on $M_{1}$ is defined by
\[
f_{\av}(x) := \int_{F_{x}} f.
\]
It is worth to mention that in the published version of \cite{MR2891739} there is a typo in this definition, which was meant to be as above, and is in this way in the arXiv version of \cite{MR2891739}.
Using the first variational formula (similarly to \cite[Lemma 2.2]{MR2891739} and \cite[Formula (1.2)]{Bordoni}), we have that $f_{\av} \in C^{\infty}_{c}(M_{1})$ and its gradient is related to the gradient of $f$ by
\begin{equation}\label{average}
\langle \grad f_{\av}(x) , X \rangle = \int_{F_{x}} \langle \grad f - f H, \tilde{X} \rangle
\end{equation}
for any $x \in M_{1}$ and $X \in T_{x}M_{1}$, where $\tilde{X}$ is the horizontal lift of $X$ on $F_{x}$. The \textit{pushdown} of $f$ on $M_{1}$ is given by
\[
g(x) := \sqrt{(f^{2})_{\av}(x)} = \left( \int_{F_{x}} f^{2} \right)^{1/2}.
\]
This quantity was used by Bordoni to establish spectral estimates for submersions with minimal fibers, and $M_{2}$ closed (cf. \cite[Section 3]{MR2963622} and the references provided there). In the context of Riemannian coverings, a similar quantity was introduced in \cite{BMP1} to derive a spectral estimate, and was used further in \cite{Mine 2} to study coverings preserving the bottom of the spectrum.

\begin{lemma}\label{pushdown estimate}
Consider $f \in C^{\infty}_{c}(M_{2})$ and its pushdown $g$ on $M_{1}$. Then for any $x\in M_{1}$ with $g(x)>0$, the gradient of $g$ satisfies
\[
\| \grad g (x) \|^{2} \leq \int_{F_{x}} \big\| (\grad f)^{h} - f \frac{H}{2} \big\|^{2}.
\]
In particular, $g$ is Lipschitz, and its gradient vanishes at almost any point where $g$ is zero.
\end{lemma}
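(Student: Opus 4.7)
The strategy is to work with $g^{2} = (f^{2})_{\av}$, which is a smooth compactly supported function on $M_{1}$, and transfer the information to $g$ via the identity $\grad(g^{2}) = 2g \grad g$ on $\{g>0\}$.

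First I would apply the average formula (\ref{average}) with $f$ replaced by $f^{2}$: for $x \in M_{1}$ and $X \in T_{x}M_{1}$,
\[
\langle \grad(g^{2})(x), X \rangle = \int_{F_{x}} \langle \grad(f^{2}) - f^{2} H, \tilde{X} \rangle = 2 \int_{F_{x}} f \, \bigl\langle \grad f - f \tfrac{H}{2}, \tilde{X} \bigr\rangle.
\]
Since $\tilde{X}$ is horizontal, $\langle \grad f, \tilde{X}\rangle = \langle (\grad f)^{h}, \tilde{X}\rangle$, and $H$ is also horizontal (being normal to the fibers). At a point $x$ with $g(x)>0$, I divide by $2g(x)$ using $\grad(g^{2}) = 2g \grad g$ to obtain
\[
g(x) \langle \grad g(x), X \rangle = \int_{F_{x}} f \, \bigl\langle (\grad f)^{h} - f \tfrac{H}{2}, \tilde{X} \bigr\rangle.
\]
Applying the Cauchy--Schwarz inequality on $F_{x}$, together with $\|\tilde{X}\| = \|X\|$, gives
\[
|g(x) \langle \grad g(x), X \rangle| \leq g(x) \, \|X\| \, \left( \int_{F_{x}} \bigl\| (\grad f)^{h} - f \tfrac{H}{2} \bigr\|^{2} \right)^{1/2}.
\]
Dividing by $g(x)$ and choosing $X = \grad g(x)$ yields the announced pointwise estimate on $\{g>0\}$.

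For the ``in particular'' part, note that the right-hand side of the estimate is a continuous, compactly supported function of $x \in M_{1}$ (the integrand is a smooth function with compact support in $M_{2}$, so the fiberwise integral is a smooth compactly supported function on $M_{1}$, by the same type of computation as for $f_{\av}$). Hence there is a uniform constant $L$ such that $\|\grad g(x)\| \leq L$ on the open set $U = \{g>0\}$. Since $g^{2}$ is smooth, $g$ is continuous on $M_{1}$, smooth on $U$, and vanishes on $M_{1} \setminus U$. A standard one-dimensional argument along minimizing curves, restricted to the portions where $g>0$ and using continuity at the endpoints where $g=0$, then upgrades the pointwise bound on $U$ to the global Lipschitz estimate $|g(x)-g(y)| \leq L \, d(x,y)$.

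Finally, since $g$ is Lipschitz, Rademacher's theorem gives that $\grad g$ exists almost everywhere on $M_{1}$. At any point $x_{0}$ where $g(x_{0}) = 0$ and $g$ is differentiable, the function $g \geq 0$ attains a local minimum at $x_{0}$, which forces $\grad g(x_{0}) = 0$. Thus $\grad g$ vanishes almost everywhere on $\{g = 0\}$. The main technical point is step three, namely deducing the global Lipschitz property from the pointwise bound on $\{g > 0\}$, since $g$ need not be smooth across the boundary of this set; everything else is a direct application of (\ref{average}) and Cauchy--Schwarz.
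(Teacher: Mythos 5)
Your proposal is correct and follows essentially the same route as the paper: apply (\ref{average}) to $f^{2}$, use Cauchy--Schwarz on the fiber, and then combine continuity of $g$ with the bounded gradient on $\{g>0\}$ and Rademacher's theorem plus non-negativity for the final claims. The only cosmetic difference is that the paper sums the estimate over an orthonormal basis of $T_{x}M_{1}$ whereas you test against an arbitrary $X$ and take $X=\grad g(x)$; your more detailed treatment of the Lipschitz step across the boundary of $\{g>0\}$ fills in what the paper leaves as ``easy to see.''
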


\begin{proof}
Given $x \in M_{1}$ such that $g(x) > 0$, it is evident that $g$ is differentiable at $x$. Consider an orthonormal basis $\{e_{i}\}_{i=1}^{m}$ of $T_{x}M_{1}$, and denote by $\tilde{e}_{i}$ the horizontal lift of $e_{i}$ on $F_{x}$, $1 \leq i \leq m$. Using (\ref{average}), we obtain that
\begin{eqnarray}
\langle \grad g (x) , e_{i} \rangle^{2} &=& \frac{1}{4g(x)^{2}} \left(\int_{F_{x}} \langle \grad f^{2} - f^{2}H, \tilde{e}_{i} \rangle\right)^{2} \nonumber \\
&=& \frac{1}{g^{2}(x)} \left(\int_{F_{x}} f \big\langle \grad f - f\frac{H}{2}, \tilde{e}_{i} \big\rangle\right)^{2} \nonumber \\
&\leq& \frac{1}{g^{2}(x)} \left(\int_{F_{x}} f^{2} \right) \left(\int_{F_{x}}\big\langle \grad f - f\frac{H}{2}, \tilde{e}_{i} \big\rangle^{2}\right) \nonumber \\
&=&  \int_{F_{x}}\big\langle \grad f - f\frac{H}{2}, \tilde{e}_{i} \big\rangle^{2}, \nonumber
\end{eqnarray}
which proves the asserted inequality, because $\{\tilde{e}_{i}\}_{i=1}^{m}$ spans the horizontal space at each point of $F_{x}$.
Bearing in mind that $g$ is continuous, and on the set where $g$ is positive we have that $g$ is differentiable with bounded gradient, it is easy to see that $g$ is Lipschitz. The proof is completed by Rademacher's Theorem and the fact that $g$ is non-negative. \qed
\end{proof}

\begin{proposition}\label{pushdown}
Let $p \colon M_{2} \to M_{1}$ be a Riemannian submersion, such that the mean curvature of the fibers satisfies $\| H \| \leq C \leq 2 \sqrt{\lambda_{0}(M_{1})}$. Consider $f \in C^{\infty}_{c}(M_{2})$, with $\| f \|_{L^{2}(M_{2})} = 1$, and its pushdown $g \in \Lip_{c}(M_{1})$. Then the Rayleigh quotients of $f$ and $g$ are related by
\[
\mathcal{R}(f) \geq (\sqrt{R(g)} - C/2)^{2} + \int_{M_{1}} \lambda_{0}(F_{x}) g^{2}(x) dx.
\]
\end{proposition}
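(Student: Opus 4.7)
The plan is to decompose the $L^{2}$-norm of $\grad f$ into its horizontal and vertical components
\[
\int_{M_{2}} \|\grad f\|^{2} = \int_{M_{2}} \|(\grad f)^{h}\|^{2} + \int_{M_{2}} \|(\grad f)^{v}\|^{2},
\]
and estimate each piece separately. Both pieces will be handled by fibering the integral over $M_{1}$ via the Fubini-type identity $\int_{M_{2}} \phi\, d\vol_{M_{2}} = \int_{M_{1}} \big(\int_{F_{x}} \phi \big) d\vol_{M_{1}}(x)$, which holds for a Riemannian submersion since $p^{*} d\vol_{M_{1}}$ is the horizontal volume form on $M_{2}$.

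For the vertical piece I use that the vertical space at $y$ is intrinsically $T_{y}F_{p(y)}$, so $(\grad f)^{v}|_{F_{x}}$ agrees with the intrinsic gradient of $f|_{F_{x}}$ on $F_{x}$. Proposition \ref{bottom} on each fiber then gives
\[
\int_{F_{x}} \|(\grad f)^{v}\|^{2} \geq \lambda_{0}(F_{x}) \int_{F_{x}} f^{2} = \lambda_{0}(F_{x}) g(x)^{2},
\]
and integration over $M_{1}$ (using Lemma \ref{measurable} for measurability of $\lambda_{0}(F_{x})$) yields the bound $\int_{M_{2}} \|(\grad f)^{v}\|^{2} \geq \int_{M_{1}} \lambda_{0}(F_{x}) g^{2}(x)\, dx$.

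For the horizontal piece set $A(x) := \big(\int_{F_{x}} \|(\grad f)^{h}\|^{2}\big)^{1/2}$. Combining Lemma \ref{pushdown estimate} with the $L^{2}(F_{x})$-triangle inequality and $\|H\| \leq C$ gives, for almost every $x$,
\[
\|\grad g(x)\| \leq \Big(\int_{F_{x}} \|(\grad f)^{h} - f H/2\|^{2}\Big)^{1/2} \leq A(x) + \tfrac{C}{2} g(x).
\]
Applying the $L^{2}(M_{1})$-triangle inequality and using $\|g\|_{L^{2}(M_{1})} = \|f\|_{L^{2}(M_{2})} = 1$ produces $\sqrt{\mathcal{R}(g)} \leq \big(\int_{M_{2}} \|(\grad f)^{h}\|^{2}\big)^{1/2} + C/2$. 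The main obstacle here is that squaring this inequality to reach $\big(\sqrt{\mathcal{R}(g)} - C/2\big)^{2} \leq \int_{M_{2}} \|(\grad f)^{h}\|^{2}$ is only valid once $\sqrt{\mathcal{R}(g)} \geq C/2$; this is precisely where the hypothesis $C \leq 2\sqrt{\lambda_{0}(M_{1})}$ is indispensable, since $g \in \Lip_{c}(M_{1}) \smallsetminus\{0\}$ combined with Proposition \ref{bottom} gives $\mathcal{R}(g) \geq \lambda_{0}(M_{1}) \geq C^{2}/4$.

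Adding the horizontal and vertical estimates then yields $\mathcal{R}(f) \geq (\sqrt{\mathcal{R}(g)} - C/2)^{2} + \int_{M_{1}} \lambda_{0}(F_{x}) g^{2}(x)\, dx$, as claimed.
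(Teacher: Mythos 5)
Your argument is correct and follows essentially the same route as the paper: Lemma \ref{pushdown estimate} to control $\mathcal{R}(g)$ by $\int_{M_{2}}\|(\grad f)^{h}\|^{2}$, the hypothesis $C/2\leq\sqrt{\lambda_{0}(M_{1})}\leq\sqrt{\mathcal{R}(g)}$ to justify the squaring step, and Proposition \ref{bottom} applied fiberwise for the vertical term. The only cosmetic difference is that you use Minkowski's inequality on $L^{2}(F_{x})$ and $L^{2}(M_{1})$ where the paper expands the square over $M_{2}$ and applies Cauchy--Schwarz once; these are equivalent.
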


\begin{proof}
The assumption that $\| f \|_{L^{2}(M_{2})} = 1$ yields that $\| g \|_{L^{2}(M_{1})} = 1$. Lemma \ref{pushdown estimate}, together with the fact that $\| H \| \leq C$ and $\| f \|_{L^{2}(M_{2})} =1$, gives the estimate
\begin{eqnarray}
\mathcal{R}(g) &\leq& \int_{M_{2}} \big\| (\grad f)^{h} - f \frac{H}{2} \big\|^{2} \nonumber \\
&\leq& \int_{M_{2}} \| (\grad f)^{h} \|^{2}  + C \int_{M_{2}} |f| \| (\grad f)^{h}\|  + \frac{C^{2}}{4}\nonumber \\
&\leq& \int_{M_{2}} \| (\grad f)^{h} \|^{2}  + C \left(\int_{M_{2}} \| (\grad f)^{h}\|^{2} \right)^{1/2} + \frac{C^{2}}{4}. \nonumber
\end{eqnarray}
In view of Proposition \ref{bottom}, we have that  $C/2 \leq \sqrt{\lambda_{0}(M_{1})} \leq \sqrt{\mathcal{R}(g)} $, which shows that
\begin{equation}\label{eq1}
\int_{M_{2}} \| (\grad f)^{h} \|^{2} \geq (\sqrt{\mathcal{R}(g)} - C/2)^{2}.
\end{equation}

Recall that at any point of $M_{2}$, the tangent space of $M_{2}$ splits as the orthogonal sum of the horizontal and the vertical space. 
Since $\| f \|_{L^{2}(M_{2})} = 1$, we deduce that
\begin{eqnarray}\label{eq2}
\int_{M_{2}} \| (\grad f)^{h} \|^{2} &=& \int_{M_{2}} \| \grad f \|^{2} - \int_{M_{2}} \| (\grad f)^{v} \|^{2} \nonumber\\
&=& \mathcal{R}(f) - \int_{M_{1}} \int_{F_{x}} \| \grad (f|_{F_{x}}) \|^{2} dx \nonumber \\
&\leq& \mathcal{R}(f) - \int_{M_{1}} \lambda_{0}(F_{x}) \int_{F_{x}} f^{2} dx \nonumber \\
&=& \mathcal{R}(f) - \int_{M_{1}} \lambda_{0}(F_{x}) g^{2}(x) dx,
\end{eqnarray}
where we applied Proposition \ref{bottom} to the fibers. The conclusion is now a consequence of (\ref{eq1}) and (\ref{eq2}). \qed
\end{proof}\medskip

\noindent\emph{Proof of Theorem \ref{lower bound}:} Given $x \in M_{1}$, set $\Lambda(x) := \lambda_{0}(F_{x})$. From Proposition \ref{bottom}, we obtain that for any $\varepsilon > 0$, there exists $f \in C^{\infty}_{c}(M_{2})$, with $\| f \|_{L^{2}(M_{2})} = 1$, such that $\mathcal{R}(f) < \lambda_{0}(M_{2}) + \varepsilon$. Let $g \in \Lip_{c}(M_{1})$ be the pushdown of $f$. Propositions \ref{bottom}, \ref{pushdown}, and the fact that $\| g \|_{L^{2}(M_{1})} = 1$, imply that
\[
\lambda_{0}(M_{2}) + \varepsilon > (\sqrt{\mathcal{R}(g)} - C/2)^{2} + \int_{M_{1}} \Lambda g^{2} \geq (\sqrt{\lambda_{0}(M_{1})} - C/2)^{2} + \inf_{M_{1}} \Lambda,
\]
which proves the first assertion, because $\varepsilon > 0$ is arbitrary.

Suppose now that the equality holds and that $\lambda_{0}(M_{1}) \notin \sigma_{\ess}(M_{1})$. By virtue of Proposition \ref{bottom}, we readily see that there exists $(f_{n})_{n \in \mathbb{N}} \subset C^{\infty}_{c}(M_{2})$, with $\| f_{n} \|_{L^{2}(M_{2})} = 1$, such that 
\[
\mathcal{R}(f_{n}) \rightarrow \lambda_{0}(M_{2}) = (\sqrt{\lambda_{0}(M_{1})} - C/2)^{2} + \inf_{M_{1}} \Lambda.
\]
Consider the sequence $(g_{n})_{n \in \mathbb{N}} \subset \Lip_{c}(M_{1})$ consisting of the pushdowns of $f_{n}$. It follows from Proposition \ref{pushdown} that for any $\varepsilon > 0$ there exists $n_{0} \in \mathbb{N}$ such that
\begin{equation}\label{eq}
\varepsilon > (\sqrt{\mathcal{R}(g_{n})} - C/2)^{2} - (\sqrt{\lambda_{0}(M_{1})} -C/2)^{2} +\int_{M_{1}} (\Lambda - \inf_{M_{1}} \Lambda) g_{n}^{2} 
\end{equation}
for any $n \geq n_{0}$. Notice that the last term is non-negative, and Proposition \ref{bottom} gives the estimate $\sqrt{\mathcal{R}(g_{n})} \geq \sqrt{\lambda_{0}(M_{1})} \geq C/2 $. Thus, (\ref{eq}) yields that $\mathcal{R}(g_{n}) \rightarrow \lambda_{0}(M_{1})$. According to \cite[Propositions 3.5 and 3.7]{Mine2}, after passing to a subsequence, if necessary, we may assume that $g_{n} \rightarrow \varphi$ in $L^{2}(M_{1})$ for some positive $\varphi \in C^{\infty}(M_{1})$ with $\Delta \varphi = \lambda_{0}(M_{1}) \varphi$.

By Lemma \ref{measurable}, we know that for any $c>0$ the set 
\[
A_{c} := \{ x \in M_{1} : \Lambda(x) \geq \inf_{M_{1}} \Lambda + c \}
\]
is closed, and in particular, measurable. For any $n \geq n_{0}$, we conclude from (\ref{eq}) that
\[
\varepsilon \geq \int_{M_{1}} (\Lambda - \inf_{M_{1}} \Lambda) g_{n}^{2} \geq \int_{A_{c}} (\Lambda - \inf_{M_{1}} \Lambda) g_{n}^{2} \geq c \int_{A_{c}} g_{n}^{2} \rightarrow c \int_{A_{c}} \varphi^{2}, \text{ as } n \rightarrow +\infty.
\]
Then $A_{c}$ is of measure zero for any $c>0$, since $\varphi$ is positive in $M_{1}$ and $\varepsilon > 0$ is arbitrary. Hence, $\lambda_{0}(F_{x})$ is equal to its infimum for almost any $x \in M_{1}$. \qed \medskip

The next example shows that in the second statement of Theorem \ref{lower bound}, in general, we do not have that $\lambda_{0}(F_{x}) = 0$ for any $x \in M_{1}$, even if the base manifold is closed and the fibers are minimal.

\begin{example}\label{discontinuous}
Let $(M,g_{0})$ be an $m$-dimensional, non-compact, complete Riemannian manifold with $m \geq 2$ and $\lambda_{0}(M,g_{0}) > 0$. Fix a diverging sequence $(x_{n})_{n \in \mathbb{N}} \subset M$ and $r_{n}>0$ such that the closed balls $C(x_{n}, 3 r_{n})$ are disjoint and the exponential map restricted to the corresponding open ball $\expo \colon B(0 , 3r_{n}) \subset T_{x_{n}}M \to M$ is injective for any $n \in \mathbb{N}$. Consider the compactly supported, Lipschitz functions
\[ 
f_{n}(y) = \left\{
\begin{array}{ll}
1 &  \text{if }d(y,x_{n}) < r_{n}, \\
2 - d(y,x_{n})/r_{n} & \text{if } r_{n}\leq d(y,x_{n}) \leq 2 r_{n},\\
0 &  \text{if } d(y,x_{n}) > 2r_{n}. \\
\end{array} 
\right. 
\]
It is clear that $\grad f_{n}$ vanishes almost everywhere outside $B(x_{n}, 2r_{n}) \smallsetminus C(x_{n},r_{n})$. The restriction of $\grad f_{n}$ in $B(x_{n},2r_{n}) \smallsetminus C(x_{n},r_{n})$ can be extended to a nowhere vanishing, smooth vector field $X_{n}$ in $B(x_{n}, 3r_{n}) \smallsetminus C(x_{n}, r_{n}/2)$ (for instance, $- r_{n}^{-1}\grad d(\cdot , x_{n})$ is such an extension). 
	
For $n \in \mathbb{N}$, consider a positive $\varphi_{n} \in C^{\infty}(M)$, with $\varphi_{n}(y) = 1$ if $d(y,x_{n}) < 3r_{n}/4$ or $d(y,x_{n})>5r_{n}/2$, and $\varphi_{n}(y) = r_{n}^{2}/nc_{n}$ if $r_{n} < d(y,x_{n}) < 2r_{n}$, where
\[
c_{n} := \max \left\{ 1 , \frac{\Vol B(x_{n}, 2r_{n}) }{\Vol B(x_{n} , r_{n})} - 1 \right\}.
\]
Let $\chi_{n} \colon [-1/2,1/2] \to [0,1]$ be an even, smooth function with $\chi_{n}(0) = 0$ and $\chi_{n}(t) = 1$ for $|t| \geq t_{n} := \min\{1 , r_{n}^{2}\}/4nc_{n}$. For each non-zero $-1/2 \leq t \leq 1/2$, define the Riemannian metric $g_{t}$ on $M$, which coincides with the original metric $g_{0}$ outside the union of $B(x_{n} , 3r_{n}) \smallsetminus C(x_{n},r_{n}/2)$ with $n \in \mathbb{N}$, and in any $B(x_{n},3r_{n}) \smallsetminus C(x_{n},r_{n}/2)$ is given by
\[ 
g_{t}(Y,Z) = \left\{
\begin{array}{ll}
(1 - \chi_{n}(t) + \chi_{n}(t) \varphi_{n})^{-1} g_{0}(X_{n},X_{n}) &  \text{if } Y=Z=X_{n}, \\
0 & \text{if } Z=X_{n} \text{ and } g_{0}(Y,X_{n}) = 0,\\
(1 - \chi_{n}(t) + \chi_{n}(t) \varphi_{n})^{1/(m-1)} g_{0}(Y,Z) &  \text{if } g_{0}(Y,X_{n}) =  g_{0}(Z,X_{n}) = 0. \\
\end{array} 
\right. 
\]
for any tangent vectors $Y, Z$. It is elementary to compute
\begin{equation}\label{gradient}
\| {\grad}_{g_{t}} f_{n} \|_{g_{t}}^{2} = r_{n}^{-2} (1 -\chi_{n}(t) + \chi_{n}(t) r_{n}^{2}/nc_{n}) \text{ in } B(x_{n},2r_{n}) \smallsetminus C(x_{n},r_{n}).
\end{equation}
From the fact that the volume element of $g_{t}$ coincides with the volume element of $g_{0}$, we derive that the Rayleigh quotient of $f_{n}$ with respect to the Laplacian corresponding to $g_{t}$ satisfies
\begin{equation}\label{Rayleigh estimate}
\mathcal{R}_{g_{t}}(f_{n}) = \frac{\int_{B(x_{n}, 2r_{n}) \smallsetminus C(x_{n}, r_{n})} \| {\grad}_{g_{t}} f_{n} \|_{g_{t}}^{2}}{\int_{B(x_{n},2r_{n})} f_{n}^{2}} 
\leq c_{n} r_{n}^{-2} (1 - \chi_{n}(t) + \chi_{n}(t) r_{n}^{2}/ nc_{n}),
\end{equation}
where we used that $f_{n} = 1$ in $B(x_{n},r_{n})$. For $t \neq 0$, there exists $n_{0} \in \mathbb{N}$ such that $\chi_{n}(t) = 1$ for any $n \geq n_{0}$. In view of Proposition \ref{bottom}, taking the limit as $n \rightarrow + \infty$ in (\ref{Rayleigh estimate}) gives that $\lambda_{0}(M , g_{t}) = 0$ for any $t \neq 0$.
	
Let $q \colon \mathbb{R} \to S^{1} = \mathbb{R} / \mathbb{Z}$ be the usual Riemannian covering. Consider the product manifold $M \times S^{1}$ endowed with the Riemannian metric $g(x,y) = g_{t} (x) \times g_{S^{1}} (y)$, for $x \in p^{-1}(q(t))$ for some $-1/2 \leq t \leq 1/2$, and $y \in S^{1}$. Then the projection to the second factor $p \colon M \times S^{1} \to S^{1}$ is a Riemannian submersion. Since the volume element of $g_{t}$ is independent from $t$, it is not hard to see that the fibers of $p$ are minimal.

It remains to show that $\lambda_{0}(M \times S^{1}, g) = 0$. To this end, consider $h_{n} \in \Lip_{c}(M \times S^{1})$ defined by $h_{n}(x,y) := f_{n}(x)$. Similarly to (\ref{Rayleigh estimate}), using that $\chi_{n}$ is even, $0 \leq \chi_{n} \leq 1$, and $\chi_{n}(t) = 1$ for $| t | \geq t_{n}$, we obtain that
\[
\mathcal{R}(h_{n}) \frac{\int_{-1/2}^{1/2} \int_{M}  \| {\grad}_{g_{t}} f_{n} \|_{g_{t}}^{2}  dt}{\int_{-1/2}^{1/2} \int_{M}  f_{n}^{2}  dt} 
\leq 2 c_{n} r_{n}^{-2} \int_{0}^{1/2}(1 - \chi_{n}(t) + \chi_{n}(t) r_{n}^{2}/ nc_{n})dt \rightarrow 0,
\]
as $n \rightarrow + \infty$. We conclude from Proposition \ref{bottom} that $\lambda_{0}(M \times S^{1}, g) = 0$, while we have that $\lambda_{0}(F_{q(0)}) > 0$, and $\lambda_{0}(F_{y}) = 0$ for any $y \in S^{1} \smallsetminus \{q(0)\}$.
\end{example}

We now discuss some straightforward applications of Theorem \ref{lower bound}.

\begin{corollary}
Let $p \colon M_{2} \to M_{1}$ be a Riemannian submersion, where $M_{1}$ is an $m$-dimensional Hadamard manifold of sectional curvature $K \leq - a^{2}$ for some $a > 0$. If the mean curvature of the fibers satisfies $\| H \| \leq C \leq (m-1) a$, then
\[
\lambda_{0}(M_{2}) \geq \frac{((m - 1)a - C)^{2}}{4} + \inf_{x \in M_{1}} \lambda_{0}(F_{x}).
\]
\end{corollary}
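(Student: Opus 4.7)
The plan is to deduce this corollary directly from Theorem \ref{lower bound} by inserting McKean's classical lower bound for the bottom of the spectrum of a Hadamard manifold with pinched negative curvature.

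First, I would invoke McKean's inequality: on an $m$-dimensional Hadamard manifold $M_{1}$ with sectional curvature $K \leq -a^{2}$, one has
\[
\lambda_{0}(M_{1}) \geq \frac{(m-1)^{2}a^{2}}{4},
\]
so that $2\sqrt{\lambda_{0}(M_{1})} \geq (m-1)a$. Combined with the hypothesis $C \leq (m-1)a$, this shows that $C \leq 2\sqrt{\lambda_{0}(M_{1})}$, so the assumption of Theorem \ref{lower bound} is satisfied.

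Next, I would apply Theorem \ref{lower bound} to obtain
\[
\lambda_{0}(M_{2}) \geq \bigl(\sqrt{\lambda_{0}(M_{1})} - C/2\bigr)^{2} + \inf_{x \in M_{1}} \lambda_{0}(F_{x}).
\]
Since $\sqrt{\lambda_{0}(M_{1})} \geq (m-1)a/2 \geq C/2$, the quantity $\sqrt{\lambda_{0}(M_{1})} - C/2$ is non-negative, so monotonicity of squaring on $[0,\infty)$ yields
\[
\bigl(\sqrt{\lambda_{0}(M_{1})} - C/2\bigr)^{2} \geq \Bigl(\frac{(m-1)a}{2} - \frac{C}{2}\Bigr)^{2} = \frac{((m-1)a - C)^{2}}{4},
\]
which gives the claimed estimate.

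There is no real obstacle here since both ingredients (McKean's inequality and Theorem \ref{lower bound}) are available; the only point requiring a moment of care is checking that the two inequalities $C \leq 2\sqrt{\lambda_{0}(M_{1})}$ and $\sqrt{\lambda_{0}(M_{1})} - C/2 \geq 0$ hold, so that Theorem \ref{lower bound} applies and the monotonicity step is valid.
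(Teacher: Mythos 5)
Your proposal is correct and follows exactly the paper's own argument: invoke McKean's bound $\lambda_{0}(M_{1}) \geq (m-1)^{2}a^{2}/4$ and then apply Theorem \ref{lower bound}, with the monotonicity check you spell out being implicit in the paper. No issues.
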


\begin{proof}
According to McKean's Theorem \cite{MR0266100}, the bottom of the spectrum of $M_{1}$ is bounded by
\[
\lambda_{0}(M_{1}) \geq \frac{(m-1)^{2}a^{2} }{4}.
\]
The asserted inequality is a consequence of Theorem \ref{lower bound}. \qed
\end{proof}\medskip

It is worth to point out that a similar estimate (without the last term) may be derived from \cite[Theorem 5.1]{MR3787357}. However, Theorem \ref{lower bound} yields a sharper estimate than \cite[Theorem 5.1]{MR3787357} for submersions over negatively curved symmetric spaces.

It is well known that an $m$-dimensional negatively curved symmetric space, after rescaling its metric, is isometric to $\mathbb{K} \mathbb{H}^{m}$, where $\mathbb{K}$ is the algebra of real, complex, quaternionic, or Cayley numbers. In the latter case, we have that $m = 2$. The sectional curvature of $\mathbb{K} \mathbb{H}^{m}$ is bounded by $-4 \leq K \leq -1$, the exponential growth of $\mathbb{K} \mathbb{H}^{m}$ is given by
\[
\mu(\mathbb{K} \mathbb{H}^{m}) = m + d - 2, \text{ where } d:= {\dim}_{\mathbb{R}}\mathbb{K},
\]
and the bottom of the spectrum satisfies $\lambda_{0}(\mathbb{K} \mathbb{H}^{m}) = \mu(\mathbb{K} \mathbb{H}^{m})^{2}/4$.

\begin{corollary}
Let $p \colon M_{2} \to \mathbb{K} \mathbb{H}^{m}$ be a Riemannian submersion with fibers of bounded mean curvature $\| H \| \leq C \leq m + d - 2$. Then
\[
\lambda_{0}(M_{2}) \geq (m + d - 2 - C/2)^{2} + \inf_{x \in M_{1}} \lambda_{0}(F_{x}).
\]
\end{corollary}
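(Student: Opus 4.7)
The plan is to apply Theorem \ref{lower bound} directly with $M_{1} = \mathbb{K}\mathbb{H}^{m}$, using the explicit value of the bottom of the spectrum recalled just before the statement, namely $\lambda_{0}(\mathbb{K}\mathbb{H}^{m}) = (m+d-2)^{2}/4$. First I would verify that the hypothesis of Theorem \ref{lower bound} is satisfied: since $2\sqrt{\lambda_{0}(\mathbb{K}\mathbb{H}^{m})} = m + d - 2$, the assumption $\|H\| \leq C \leq m+d-2$ is exactly the required bound $\|H\| \leq C \leq 2\sqrt{\lambda_{0}(M_{1})}$.

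Once the hypothesis is checked, the conclusion is essentially an algebraic substitution: plugging $\sqrt{\lambda_{0}(M_{1})} = (m+d-2)/2$ into the estimate of Theorem \ref{lower bound} gives
\[
\lambda_{0}(M_{2}) \geq \left( \tfrac{m+d-2}{2} - \tfrac{C}{2} \right)^{2} + \inf_{x \in M_{1}} \lambda_{0}(F_{x}),
\]
and the right-hand side simplifies to the stated lower bound. There is no genuine obstacle here, as this corollary is a pure specialization of Theorem \ref{lower bound} to the negatively curved symmetric spaces, for which $\lambda_{0}$ is known exactly (equal to $\mu^{2}/4$, with $\mu$ the exponential growth rate $m+d-2$), so no McKean-type inequality is needed and no information is lost in the substitution. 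The same specialization works verbatim for each of the four cases $\mathbb{K} \in \{\R,\C,\Hy,\mathbb{O}\}$, the only input being the corresponding value of $d = \dim_{\R} \mathbb{K}$ (with $m=2$ in the Cayley case). I would end the proof simply by writing \qed after the computation.
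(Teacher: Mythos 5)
Your approach is exactly the paper's: the corollary is a pure specialization of Theorem \ref{lower bound}, using the value $\lambda_{0}(\mathbb{K}\mathbb{H}^{m}) = (m+d-2)^{2}/4$ recalled just before the statement, and your check that $C \leq m+d-2 = 2\sqrt{\lambda_{0}(M_{1})}$ verifies the hypothesis is correct. The one genuine problem is your final sentence of the computation: the expression you correctly obtain from the substitution,
\[
\left( \frac{m+d-2}{2} - \frac{C}{2} \right)^{2} \;=\; \frac{(m+d-2-C)^{2}}{4},
\]
does \emph{not} simplify to the stated bound $(m+d-2-C/2)^{2}$; these differ by a factor of $4$ already at $C=0$, where the stated bound would give $\lambda_{0}(M_{2}) \geq (m+d-2)^{2} = 4\lambda_{0}(\mathbb{K}\mathbb{H}^{m})$, which fails for the Riemannian product $M_{2} = \mathbb{K}\mathbb{H}^{m} \times F$ with $F$ closed. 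The discrepancy lies in the printed statement of the corollary, not in your substitution: the analogous corollary for Hadamard base spaces is written as $((m-1)a - C)^{2}/4$, consistent with your intermediate expression. So rather than asserting that your right-hand side ``simplifies to the stated lower bound,'' you should either state the conclusion as $\lambda_{0}(M_{2}) \geq \tfrac{1}{4}(m+d-2-C)^{2} + \inf_{x} \lambda_{0}(F_{x})$ or flag the statement as containing a typo; as written, the last step of your proof claims an identity that is false.
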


\begin{proof}
It follows immediately from Theorem \ref{lower bound}. \qed
\end{proof}\medskip

A wider class of examples where Theorem \ref{lower bound} is applicable consists of submersions over complete, negatively curved, locally symmetric spaces.
Any such space, after rescaling its Riemannian metric, is isometric to a regular quotient $M = \mathbb{K} \mathbb{H}^{m} / \Gamma$, where $\Gamma$ is a discrete group. According to the formulas of Sullivan \cite{MR882827} and Corlette \cite{MR1074486}, the bottom of the spectrum of $M$ is given by
\[ 
\lambda_{0}(M) = \left\{
\begin{array}{ll}
\lambda_{0}(\mathbb{K} \mathbb{H}^{m}) &  \text{if } \mu (\Gamma) \leq \mu(\mathbb{K} \mathbb{H}^{m}) / 2, \\
\mu(\Gamma)(\mu(\mathbb{K} \mathbb{H}^{m}) - \mu(\Gamma)) & \text{if }  \mu(\Gamma) \geq \mu(\mathbb{K} \mathbb{H}^{m}) / 2, \\
\end{array} 
\right. 
\]
where $\mu(\Gamma)$ is the exponential growth of $\Gamma$. Theorem \ref{lower bound} can be applied to submersions over such a manifold $M$ in the apparent way.

\section{Submersions with closed fibers}\label{Section closed fibers}

Throughout this section we consider a Riemannian submersion $p \colon M_{2} \to M_{1}$ of complete manifolds with closed fibers. We denote by $V(x)$ the volume of the fiber $F_{x}$ over $x \in M_{1}$, by $H$ the mean curvature vector field of the fibers, and by $X$ the smooth vector field on $M_{1}$ defined by
\[
X(x) := \frac{1}{V(x)} \int_{F_{x}} p_{*} H
\]
for any $x \in M_{1}$. 

Fix an open, bounded domain $U$ of $M_{1}$, and let $f \in C^{\infty}_{c}(M_{2})$ with $f = 1$ in $F_{y}$ for any $y \in U$. Then $f_{\av} = V$ in $U$, and (\ref{average}) shows that $\grad V = - V X$ in $U$. It follows from the fact that $U$ is arbitrary that $\grad V = - V X$ in $M_{1}$. It is immediate to verify that
\[
-\frac{\Delta \sqrt{V}}{\sqrt{V}} = \frac{1}{4} \| X \|^{2} - \frac{1}{2} \diver X.
\]
Let $S$ be the Schr\"{o}dinger operator on $M_{1}$ defined by
\[
S := \Delta - \frac{\Delta \sqrt{V}}{\sqrt{V}} = \Delta + \frac{1}{4} \| X \|^{2} - \frac{1}{2} \diver X.
\]
Observe that $S \sqrt{V} = 0$, which allows us to consider the renormalization $S_{\sqrt{V}}$ of $S$ with respect to $\sqrt{V}$.\medskip

\noindent{\emph{Proof of Theorem \ref{thm2}:}} Let $f \in C^{\infty}_{c}(M_{1}) \smallsetminus \{0\}$ and $\tilde{f}$ its lift on $M_{2}$. By Lemmas \ref{renormalized} and \ref{lift}, we deduce that
\begin{equation}\label{eq lift}
\mathcal{R}(\tilde{f}) = \frac{\int_{M_{2}} \| \grad \tilde{f} \|^{2}}{\int_{M_{2}} \tilde{f}^{2}} = \frac{\int_{M_{1}} \| \grad f \|^{2} V}{\int_{M_{1}} f^{2} V} = \mathcal{R}_{S_{\sqrt{V}}}(f) = \mathcal{R}_{S}(f \sqrt{V}).
\end{equation}
This, together with Proposition \ref{bottom}, proves that $\lambda_{0}(M_{2}) \leq \lambda_{0}(S)$. We know from Corollary \ref{bottom of essential spectrum} that there exists $(f_{n})_{n \in \mathbb{N}} \subset C^{\infty}_{c}(M_{1}) \smallsetminus \{0\}$, with $\supp f_{n}$ pairwise disjoint, such that $\mathcal{R}_{S}(f_{n}) \rightarrow \lambda_{0}^{\ess}(S)$. Then the lifts $\tilde{g}_{n}$ of $g_{n}:=f_{n}/\sqrt{V}$, also have pairwise disjoint supports. Taking into account Proposition \ref{disjoint} and (\ref{eq lift}), it is easy to see that
\[
\lambda_{0}^{\ess}(M_{2}) \leq \liminf_{n} \mathcal{R}(\tilde{g_{n}}) = \liminf_{n} \mathcal{R}_{S}(f_{n}) = \lambda_{0}^{\ess}(S),
\]
which establishes the first assertion.

Suppose now that the submersion has fibers of basic mean curvature. Consider $\lambda \in \mathbb{R}$, $f \in C^{\infty}_{c}(M_{1}) \smallsetminus \{0\}$, and $\tilde{f}$ its lift on $M_{2}$. Using Lemma \ref{lift}, formula (\ref{renormalized exp}), and that $\grad V = - V p_{*}H$, we compute
\begin{eqnarray}
\| (\Delta  - \lambda)\tilde{f} \|^{2}_{L^{2}(M_{2})} &=& \int_{M_{2}} (\widetilde{\Delta f} + \langle \widetilde{\grad f} , H \rangle - \lambda \tilde{f})^{2} \nonumber \\
&=& \int_{M_{1}} (\Delta f + \langle \grad f , p_{*}H \rangle - \lambda f)^{2}V  \nonumber \\
&=& \int_{M_{1}} \bigg(\Delta f - \frac{2}{\sqrt{V}} \langle \grad f , \grad \sqrt{V} \rangle - \lambda f\bigg)^{2}V  \nonumber \\
&=& \| (S_{\sqrt{V}} - \lambda) f \|_{L^{2}_{\sqrt{V}}(M_{1})}^{2}. \nonumber
\end{eqnarray}
In view of Lemma \ref{renormalized}, this implies that
\begin{equation}\label{eq lift 2}
\frac{\| (\Delta  - \lambda)\tilde{f} \|^{2}_{L^{2}(M_{2})}}{\| \tilde{f} \|^{2}_{L^{2}(M_{2})}} = \frac{\| (S_{\sqrt{V}}  - \lambda)f \|^{2}_{L^{2}_{\sqrt{V}}(M_{1})}}{\| f \|^{2}_{L^{2}_{\sqrt{V}}(M_{1})}} = \frac{\| (S  - \lambda)(f \sqrt{V})\|^{2}_{L^{2}(M_{1})}}{\| f \sqrt{V} \|^{2}_{L^{2}(M_{1})}}.
\end{equation}

From Proposition \ref{spectrum}, we obtain that for any $\lambda \in \sigma_{\ess}(S)$, there exists a characteristic sequence $(f_{n})_{n \in \mathbb{N}} \subset C^{\infty}_{c}(M_{1})$ for $S$ and $\lambda$, with $\supp f_{n}$ pairwise disjoint. Then (\ref{eq lift 2}) yields that the sequence $(\tilde{g}_{n})_{n \in \mathbb{N}} \subset C^{\infty}_{c}(M_{2})$, consisting of the lifts of $g_{n}:=f_{n}/\sqrt{V}$, is a characteristic sequence for $\Delta$ and $\lambda$, with $\supp \tilde{g}_{n}$ pairwise disjoint. We conclude from Proposition \ref{spectrum} that $\lambda \in \sigma_{\ess}(M_{2})$. The proof of $\sigma(S) \subset \sigma(M_{2})$ is similar. \qed 

\begin{corollary}\label{closed and bounded}
Let $p \colon M_{2} \to M_{1}$ be a Riemannian submersion of complete manifolds, with closed fibers of bounded mean curvature $\| H \| \leq C$. Then
\[
\lambda_{0}(M_{2}) \leq (\sqrt{\lambda_{0}(M_{1})} + C/2)^{2} \text{ and } \lambda_{0}^{\ess}(M_{2}) \leq (\sqrt{\lambda_{0}^{\ess}(M_{1})} + C/2)^{2}.
\]
\end{corollary}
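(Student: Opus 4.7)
The plan is to use Theorem \ref{thm2} to transfer the problem to the base space: since $\lambda_{0}(M_{2}) \leq \lambda_{0}(S)$ and $\lambda_{0}^{\ess}(M_{2}) \leq \lambda_{0}^{\ess}(S)$, it suffices to establish the estimates
\[
\lambda_{0}(S) \leq (\sqrt{\lambda_{0}(M_{1})} + C/2)^{2}, \qquad \lambda_{0}^{\ess}(S) \leq (\sqrt{\lambda_{0}^{\ess}(M_{1})} + C/2)^{2}.
\]
The comparison of Schr\"{o}dinger operators on the single base $M_{1}$ should then follow from a completing-the-square trick together with the triangle inequality in $L^{2}$.

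First I would observe that the vector field $X$ defined at the start of Section \ref{Section closed fibers} satisfies $\| X \| \leq C$ pointwise on $M_{1}$. Indeed, the mean curvature $H$ is horizontal, so $\| p_{*} H(y) \| = \| H(y) \| \leq C$ for every $y \in M_{2}$ by the Riemannian submersion property, and $X(x)$ is the fiberwise average of $p_{*}H$ over $F_{x}$.

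Next, I would rewrite the Rayleigh quotient of $S$ via the identity $S = \Delta + \tfrac{1}{4}\|X\|^{2} - \tfrac{1}{2} \diver X$. For any $f \in C^{\infty}_{c}(M_{1}) \smallsetminus \{0\}$, integration by parts gives $-\tfrac{1}{2}\int f^{2} \diver X = \int f \langle \grad f , X \rangle$, and completing the square yields
\[
\mathcal{R}_{S}(f) = \frac{\int_{M_{1}} \| \grad f + \tfrac{1}{2} f X \|^{2}}{\int_{M_{1}} f^{2}}.
\]
Applying the $L^{2}$-triangle inequality together with $\|X\| \leq C$, I obtain
\[
\sqrt{\mathcal{R}_{S}(f)} \leq \sqrt{\mathcal{R}(f)} + \frac{C}{2} \sqrt{\frac{\int f^{2} \|X\|^{2}}{\int f^{2}}} \cdot \tfrac{1}{1} \leq \sqrt{\mathcal{R}(f)} + \frac{C}{2}.
\]
Taking the infimum over all $f \in C^{\infty}_{c}(M_{1}) \smallsetminus \{0\}$ and invoking Proposition \ref{bottom} gives $\sqrt{\lambda_{0}(S)} \leq \sqrt{\lambda_{0}(M_{1})} + C/2$, which is the first estimate.

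For the essential spectrum, I would run the same Rayleigh-quotient comparison on the complement of an exhausting sequence $(K_{n})_{n \in \mathbb{N}}$ of compact subsets of $M_{1}$: since the above inequality is obtained by testing against compactly supported functions, it also gives $\sqrt{\lambda_{0}(S, M_{1} \smallsetminus K_{n})} \leq \sqrt{\lambda_{0}(M_{1} \smallsetminus K_{n})} + C/2$ for each $n$. Letting $n \to +\infty$ and applying Proposition \ref{exhausting} (which requires completeness of $M_{1}$, granted by hypothesis) to both $S$ and $\Delta$ yields $\sqrt{\lambda_{0}^{\ess}(S)} \leq \sqrt{\lambda_{0}^{\ess}(M_{1})} + C/2$, completing the proof. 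The only mild subtlety I anticipate is justifying the bound $\|X\| \leq C$ and setting up the completing-the-square identity cleanly; both are straightforward once the formula $\grad V = -VX$ and the horizontality of $H$ are used.
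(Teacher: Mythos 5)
Your proposal is correct and follows essentially the same route as the paper: reduce to the Schr\"{o}dinger operator $S$ via Theorem \ref{thm2}, observe $\|X\| \leq C$, and establish $\mathcal{R}_{S}(f) \leq (\sqrt{\mathcal{R}(f)} + C/2)^{2}$ by integration by parts --- your completing-the-square plus $L^{2}$-triangle-inequality argument is just a repackaging of the paper's expansion with Cauchy--Schwarz. The only (harmless) deviation is in the essential-spectrum step, where you apply Proposition \ref{exhausting} to an exhaustion of $M_{1}$ instead of testing the same Rayleigh-quotient bound on the disjointly supported sequence provided by Corollary \ref{bottom of essential spectrum} and Proposition \ref{disjoint}; both yield $\lambda_{0}^{\ess}(S) \leq (\sqrt{\lambda_{0}^{\ess}(M_{1})} + C/2)^{2}$.
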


\begin{proof}
Let $X$ be the vector field and $S$ the Schr\"{o}dinger operator defined in the beginning of this section. Notice that $\| X \| \leq C$. Given $f \in C^{\infty}_{c}(M_{1})$, with $\| f \|_{L^{2}(M_{1})}=1$, we have that
\begin{eqnarray}\label{eq3}
\mathcal{R}_{S}(f) &=& \mathcal{R}(f) + \frac{1}{4} \int_{M_{1}} \| X \|^{2} f^{2} - \frac{1}{2} \int_{M} f^{2} \diver X \nonumber \\
&\leq& \mathcal{R}(f) + \frac{C^{2}}{4} + \frac{1}{2} \int_{M} | \langle \grad f^{2} , X \rangle | \nonumber \\
&\leq& \mathcal{R}(f) + \frac{C^{2}}{4} + C \int_{M} |f| \| \grad f \|  \nonumber \\
&\leq & (\sqrt{\mathcal{R}(f)} + C/2 )^{2},
\end{eqnarray}
where we used the divergence formula and the Cauchy-Schwarz inequality. By virtue of Proposition \ref{bottom}, estimate (\ref{eq3}) shows that $\lambda_{0}(S) \leq (\sqrt{\lambda_{0}(M_{1})} + C/2)^{2}$. The first statement is now a consequence of Theorem \ref{thm2}.

According to Corollary \ref{bottom of essential spectrum}, there exists $(f_{n})_{n \in \mathbb{N}} \subset C^{\infty}_{c}(M_{1})$, with $\supp f_{n}$ pairwise disjoint, $\| f_{n} \|_{L^{2}(M_{1})} = 1$, and $\mathcal{R}(f_{n}) \rightarrow \lambda_{0}^{\ess}(M_{1})$. Bearing in mind Proposition \ref{disjoint} and (\ref{eq3}), it is straightforward to verify that 
\[
\lambda_{0}^{\ess}(S) \leq \liminf_{n} \mathcal{R}_{S}(f_{n}) \leq (\sqrt{\lambda_{0}^{\ess}(M_{1})} + C/2)^{2}.
\]
The proof is completed by Theorem \ref{thm2}. \qed
\end{proof}\medskip

\noindent\emph{Proof of Corollary \ref{discrete spectrum}:} 
Let $C$ be the supremum of the norm of the mean curvature of the fibers. It follows from Corollary \ref{closed and bounded} that if the spectrum $M_{1}$ is not discrete, then the spectrum of $M_{2}$ is not discrete.

Conversely, suppose that $M_{1}$ has discrete spectrum, and let $(K_{n})_{n \in \mathbb{N}}$ be an exhausting sequence of $M_{1}$ consisting of compact subsets of $M_{1}$. From Proposition \ref{exhausting}, we readily see that $(\lambda_{0}(M_{1} \smallsetminus K_{n}))_{n \in \mathbb{N}}$ is an increasing sequence that diverges. In particular, there exists $n_{0} \in \mathbb{N}$ such that $C \leq 2\sqrt{\lambda_{0}(M_{1} \smallsetminus K_{n})}$ for any $n \geq n_{0}$. Applying Theorem \ref{lower bound} to the restriction of $p \colon M_{2} \smallsetminus p^{-1}(K_{n}) \to M_{1} \smallsetminus K_{n}$ over any connected component of $M_{1} \smallsetminus K_{n}$ gives that
\begin{equation}\label{eq4}
\lambda_{0}(M_{2} \smallsetminus p^{-1}(K_{n})) \geq (\sqrt{\lambda_{0}(M_{1} \smallsetminus K_{n})} - C/2)^{2}
\end{equation}
for any $n \geq n_{0}$. Observe that $(p^{-1}(K_{n}))_{n \in \mathbb{N}}$ is an exhausting sequence of $M_{2}$ consisting of compact subsets of $M_{2}$, because $p$ has closed fibers. In view of Proposition \ref{exhausting}, taking the limit as $n \rightarrow + \infty$ in (\ref{eq4}), we derive that $M_{2}$ has discrete spectrum.\qed 
\medskip

Finally, we present some basic examples where our results can be applied. We assume that the manifolds involved in these examples are complete.

\begin{examples}
\begin{enumerate}[topsep=0pt,itemsep=-1pt,partopsep=1ex,parsep=0.5ex,leftmargin=*, label=(\roman*), align=left, labelsep=0em]
\item The \textit{warped product} $M_{2} = M_{1} \times_{\psi} F$ is the product manifold endowed with the Riemannian metric $g_{N} \times \psi^{2} g_{F}$, where $\psi \in C^{\infty}(M_{1})$ is positive. The projection to the first factor $p \colon M_{2} \to M_{1}$ is a Riemannian submersion with fibers of basic mean curvature
\[
H = - k \grad (\ln \tilde{\psi}),
\]
where $k = \dim(F)$. Suppose that $F$ is closed, and consider the Schr\"{o}dinger operator
\[
S := \Delta - \frac{\Delta \psi^{k/2}}{\psi^{k/2}}
\]
on $M_{1}$. Taking into account Theorem \ref{thm2}, we deduce that $\sigma(S) \subset \sigma(M_{2})$ and $\sigma_{\ess}(S) \subset \sigma_{\ess}(M_{2})$. If, in addition, $\grad (\ln \psi)$ is bounded, then Corollary \ref{discrete spectrum} implies that $\sigma_{\ess}(M_{1}) = \emptyset$ if and only if $\sigma_{\ess}(M_{2}) = \emptyset$. It is worth to point out that surfaces of revolution are warped products of the form $\mathbb{R} \times_{\psi} S^{1}$.

\item A wider class of Riemannian submersions than warped products, consists of \textit{Clairaut submersions}, which were introduced by Bishop motivated by a result of Clairaut on surfaces of revolution. A Riemannian submersion $p \colon M_{2} \to M_{1}$ is called Clairaut submersion if there exists a positive $f \in C^{\infty}(M_{2})$, such that for any geodesic $c$ on $M_{2}$, the function $(f \circ c) \sin \theta$ is constant, where $\theta(t)$ is the angle between $c^{\prime}(t)$ and $(T_{c(t)}M_{2})^{h}$. Bishop showed that a Riemannian submersion $p \colon M_{2} \to M_{1}$ with connected fibers, is a Clairaut submersion if and only if the fibers are totally umbilical with mean curvature
\[
H = - k \grad (\ln \tilde{\psi})
\]
for some positive $\psi \in C^{\infty}(M_{1})$, where $k$ is the dimension of the fiber (cf. for instance \cite[Theorem 1.7]{MR2110043}). It is immediate to obtain statements for Clairaut submersions with closed and connected fibers, analogous to the ones we established for warped products.

\item Let $G$ be a compact and connected Lie group acting smoothly and freely via isometries on a Riemannian manifold $M$, with $\dim (M) > \dim (G)$. Then the projection $p \colon M \to M / G$ is a Riemannian submersion with closed fibers of basic mean curvature.
\end{enumerate}
\end{examples}

\begin{bibdiv}
\begin{biblist}

\bib{BMP1}{article}{
	author={Ballmann, W.},
	author={Matthiesen, H.},
	author={Polymerakis, P.},
	title={On the bottom of spectra under coverings},
	journal={Math. Z.},
	volume={288},
	date={2018},
	number={3-4},
	pages={1029--1036},
	issn={0025-5874},
}

\bib{MR2891739}{article}{
   author={Bessa, G. P.},
   author={Montenegro, J. F.},
   author={Piccione, P.},
   title={Riemannian submersions with discrete spectrum},
   journal={J. Geom. Anal.},
   volume={22},
   date={2012},
   number={2},
   pages={603--620. https://arxiv.org/abs/1001.0853},
}

\bib{Bordoni}{article}{
	author={Bordoni, M.},
	title={Spectral estimates for submersions with fibers of basic mean
		curvature},
	journal={An. Univ. Vest Timi\c{s}. Ser. Mat.-Inform.},
	volume={44},
	date={2006},
	number={1},
	pages={23--36},
}

\bib{MR2963622}{article}{
	author={Bordoni, M.},
	title={Spectra of submersions},
	conference={
		title={Contemporary geometry and related topics},
	},
	book={
		publisher={Univ. Belgrade Fac. Math., Belgrade},
	},
	date={2006},
}

\bib{MR3787357}{article}{
	author={Cavalcante, M. P.},
	author={Manfio, F.},
	title={On the fundamental tone of immersions and submersions},
	journal={Proc. Amer. Math. Soc.},
	volume={146},
	date={2018},
	number={7},
	pages={2963--2971},
	issn={0002-9939},
}

\bib{MR1074486}{article}{
	author={Corlette, K.},
	title={Hausdorff dimensions of limit sets. I},
	journal={Invent. Math.},
	volume={102},
	date={1990},
	number={3},
	pages={521--541},
	issn={0020-9910},
}

\bib{MR592568}{article}{
	author={Donnelly, H.},
	title={On the essential spectrum of a complete Riemannian manifold},
	journal={Topology},
	volume={20},
	date={1981},
	number={1},
	pages={1--14},
	issn={0040-9383},
}

\bib{MR544241}{article}{
	author={Donnelly, H.},
	author={Li, P.},
	title={Pure point spectrum and negative curvature for noncompact
		manifolds},
	journal={Duke Math. J.},
	volume={46},
	date={1979},
	number={3},
	pages={497--503},
}

\bib{MR2110043}{book}{
	author={Falcitelli, M.},
	author={Ianus, S.},
	author={Pastore, A. M.},
	title={Riemannian submersions and related topics},
	publisher={World Scientific Publishing Co., Inc., River Edge, NJ},
	date={2004},
	pages={xiv+277},
	isbn={981-238-896-6},
}

\bib{MR562550}{article}{
	author={Fischer-Colbrie, D.},
	author={Schoen, R.},
	title={The structure of complete stable minimal surfaces in $3$-manifolds
		of nonnegative scalar curvature},
	journal={Comm. Pure Appl. Math.},
	volume={33},
	date={1980},
	number={2},
	pages={199--211},
	issn={0010-3640},
}

\bib{MR2218016}{article}{
	author={Grigor\cprime yan, A.},
	title={Heat kernels on weighted manifolds and applications},
	conference={
		title={The ubiquitous heat kernel},
	},
	book={
		series={Contemp. Math.},
		volume={398},
		publisher={Amer. Math. Soc., Providence, RI},
	},
	date={2006},
	pages={93--191},
}

\bib{MR0266100}{article}{
	author={McKean, H. P.},
	title={An upper bound to the spectrum of $\Delta $ on a manifold of
		negative curvature},
	journal={J. Differential Geom.},
	volume={4},
	date={1970},
	pages={359--366},
}

\bib{Mine2}{article}{
	author={Polymerakis, P.},
	title={Coverings preserving the bottom of the spectrum},
	journal={MPI-Preprint 2019-3, https://arxiv.org/abs/1811.07844},
}

\bib{Mine}{article}{
	author={Polymerakis, P.},
	title={On the spectrum of differential operators under Riemannian coverings},
	journal={J. Geom. Anal. (2019). https://doi.org/10.1007/s12220-019-00196-1},
}

\bib{MR882827}{article}{
	author={Sullivan, D.},
	title={Related aspects of positivity in Riemannian geometry},
	journal={J. Differential Geom.},
	volume={25},
	date={1987},
	number={3},
	pages={327--351},
}

\end{biblist}
\end{bibdiv}

\noindent Max Planck Institute for Mathematics \\
Vivatsgasse 7, 53111, Bonn \\
E-mail address: polymerp@mpim-bonn.mpg.de

\end{document}